\newcommand{\w}{\omega}
\newcommand{\IN}{\mathbb N}
\newcommand{\IR}{\mathbb R}
\newcommand{\e}{\varepsilon}
\newcommand{\Ra}{\Rightarrow}
\newtheorem{theorem}{Theorem}
\newtheorem{corollary}{Corollary}
\newtheorem{lemma}{Lemma}
\newtheorem{example}{Example}
\newtheorem{proposition}{Proposition}
\theoremstyle{definition}
\newtheorem{remark}{Remark}
\title[The completion of the hyperspace of finite subsets]{The completion of the hyperspace of finite subsets, endowed with the $\ell^1$-metric}
\author[I.~Banakh, T.~Banakh, J.~Garbuli\'nska-W\c egrzyn]{Iryna Banakh$^1$, Taras Banakh$^{2,3}$ and  Joanna Garbuli\'nska-W\c egrzyn$^{3}$}
\address{$^{1}$Pidstryhach Institute for Applied Problems of Mechanics and Mathematics, National Academy of Sciences of Ukraine, Lviv, Naukova 3b, Ukraine}
\email{ibanakh@yahoo.com}  
\address{$^{2}$Ivan Franko National University of Lviv, Ukraine}
\email{t.o.banakh@gmail.com}
\address{$^{3}$Institute of Mathematics, Jan Kochanowski University, Kielce, Poland}
\email{jgarbulinska@ujk.edu.pl}
\subjclass{54B20; 54E35; 54E50; 54F45; 05C90}
\keywords{Hyperspace of finite subsets, completion, $\ell^1$-metric, set of zero length}
\begin{document}

\begin{abstract} For a metric space $X$, let $\mathsf FX$ be the space of all nonempty finite subsets of $X$ endowed with the largest metric $d^1_{\mathsf FX}$ such that for every $n\in\IN$ the map $X^n\to\mathsf FX$, $(x_1,\dots,x_n)\mapsto \{x_1,\dots,x_n\}$, is  non-expanding with respect to the $\ell^1$-metric on $X^n$.
 We study the completion of the metric space $\mathsf F^1\!X=(\mathsf FX,d^1_{\mathsf FX})$ and prove that it coincides with the space $\mathsf Z^1\!X$ of nonempty compact subsets of $X$ that have zero length (defined with the help of graphs). We prove that each subset of zero length in a metric space has 1-dimensional Hausdorff measure zero. A subset $A$ of the real line has zero length if and only if its closure is compact and has Lebesgue measure zero. On the other hand, for every $n\ge 2$ the Euclidean space $\IR^n$ contains a compact subset of 1-dimensional Hausdorff measure zero that fails to have zero length.
\end{abstract}
\maketitle

\section{Introduction}

Given a metric space $X$ with metric $d_X$, denote by $\mathsf KX$ the space of all nonempty compact subsets of $X$, endowed with the Hausdorff metric $d_{\mathsf KX}$ defined by the formula
$$d_{\mathsf KX}(A,B)=\max\{\max_{a\in A}\min_{b\in B}d_X(a,b),\max_{b\in B}\min_{a\in A}d_X(b,a)\}.$$ The metric space $\mathsf KX$, called the {\em hyperspace} of $X$, plays an important role in  General Topology \cite[\S3.2]{Beer}, \cite[4.5.23]{Eng} and Theory of Fractals \cite[\S2.5]{Edgar}, \cite[\S9.1]{Fal}. It is well-known \cite[4.5.23]{Eng} that for any complete (and compact) metric space $X$ its hyperspace $\mathsf KX$ is complete (and compact). The hyperspace $\mathsf KX$ contains an important dense subspace $\mathsf FX$ consisting of nonempty finite subsets of $X$. The density of $\mathsf FX$ in $\mathsf KX$ implies that for a complete metric space $X$, the hyperspace $\mathsf KX$ is a completion of the hyperspace $\mathsf FX$.

In \cite[\S30]{BBKZ} it was shown that the Hausdorff metric $d_{\mathsf FX}$ on $\mathsf FX$ coincides with the largest metric on $\mathsf FX$ such that for every $n\in\IN$ the map $X^n\to \mathsf FX$, $x\mapsto x[n]:=\{x(i):i\in n\}$, is non-expanding, where $X^n$ is endowed with the $\ell^\infty$-metric
$$d^\infty_{X^n}(x,y)=\max_{i\in n}d_X(x(i),y(i)).$$
Here we identify the natural number $n$ with the set $\{0,\dots,n-1\}$ and think of the elements of $X^n$ as functions $x:n\to X$.

Let us recall that a function $f:Y\to Z$ between metric spaces $(Y,d_Y)$ and $(Z,d_Z)$ is {\em non-expanding} if $d_Z(f(y),f(y'))\le d_Y(y,y')$ for any $y,y'\in Y$. 

It is well-known that the $\ell^\infty$-metric $d^\infty_{X^n}$ on $X^n$ is the limit at $p\to\infty$ of the $\ell^p$-metrics $d^p_{X^n}$ on $X^n$, defined by the formula:
$$d^p_{X^n}(x,y)=\Big(\sum_{i=1}^nd_X(x(i),y(i))^p\Big)^{\frac1p}\mbox{ \ for \ $x,y\in X^n$.}$$

Given any metric space $(X,d)$ and any number $p\in[1,\infty]$, let $d^p_{\mathsf FX}$ be the largest metric $d^p_{\mathsf FX}$ on the set $\mathsf FX$ such that for every $n\in\IN$ the map $X^n\to \mathsf FX$, $x\mapsto x[n]$, is non-expanding with respect to the $\ell^p$-metric $d^p_{X^n}$ on $X^n$.  
The metric $d^p_{\mathsf FX}$ was introduced in \cite{BBKZ}, where it was shown  that $d^p_{\mathsf FX}$ is a well-defined metric on $\mathsf FX$ such that 
$$d_{\mathsf FX}=d^\infty_{\mathsf FX}\le d^p_{\mathsf FX}\le d^1_{\mathsf FX},$$
where $d_{\mathsf FX}$ stands for the Hausdorff metric on $\mathsf FX$. 

By $\mathsf F^p\!X$ we will denote the metric space $(\mathsf FX,d^p_{\mathsf FX})$. So, $\mathsf F^\infty\! X$ coincides with the hyperspace $\mathsf FX$ endowed with the Hausdorff metric.

As we already know, for any complete metric space $X$, the completion $\hat {\mathsf F}^\infty\!X$ of the metric space $\mathsf F^\infty\!X$ can be identified with the hyperspace $\mathsf KX$ endowed with the Hausdorff metric. In this paper we study the completion $\hat {\mathsf F}^1\!X$ of the metric space $\mathsf F^1\!X=(\mathsf FX,d^1_{\mathsf FX})$ and show that it can be identified with the space $\mathsf Z^1\!X$ of nonempty compact subsets of zero length in $X$.  Sets of zero length are defined with the help of graphs.

By a {\em graph} we understand a pair $\Gamma=(V,E)$ consisting of a set $V$ of vertices and a set $E$ of edges. Each edge $e\in E$ is a nonempty subset of $V$ of cardinality $|e|\le 2$. A graph $(V,E)$ is {\em finite} if its set of vertices $V$ is finite. In this case the set of edges $E$ is finite, too.

For a graph $\Gamma=(V,E)$, a subset $C\subseteq V$ is {\em connected} if for any vertices $x,y\in C$ there exists a sequence of vertices $c_0,\dots,c_n\in C$ such that $c_0=x$, $c_n=y$ and $\{c_{i-1},c_i\}\in E$ for every $i\in\{1,\dots,n\}$. The maximal connected subsets of $V$ are called the {\em connected components} of the graph $\Gamma$. It is easy to see that two connected components of $\Gamma$ either coincide or are disjoint. For a vertex $x\in V$ by $\Gamma(x)$ we shall denote the unique connected component of the graph $\Gamma$ that contains the point $x$.

By a {\em graph in a metric space} $(X,d_X)$ we understand any graph $\Gamma=(V,E)$ with $V\subseteq X$. In this case we can define the {\em total length} $\ell(\Gamma)$ of $\Gamma$ by the formula
$$\ell(\Gamma)=\sum_{\{x,y\}\in E}d_X(x,y).$$ If $E$ is infinite, then by $\sum\limits_{\{x,y\}\in E}d_X(x,y)$ we understand the (finite or infinite) number $$\sup\limits_{E'\in \mathsf FE}\sum\limits_{\{x,y\}\in E'}d_X(x,y).$$ For a subset $C\subseteq X$ by $\overline C$ we denote the closure of $C$ in the metric space $(X,d_X)$.

Given a subset $A$ of a metric space $X$, denote by $\mathbf \Gamma_{\!X\!}(A)$ the family of graphs $\Gamma=(V,E)$ with finitely many connected components such that $V\subseteq X$ and $A\subseteq\overline V$. Observe that the family $\mathbf \Gamma_{\!X\!}(A)$ contains the complete graph on the set $A$ and hence $\mathbf \Gamma_{\!X\!}(A)$ is not empty. 

The set $A$ is defined to have {\em zero length in} $X$ if for any $\e>0$ there exists a graph $\Gamma\in\mathbf \Gamma_{\!X\!}(A)$ of total length $\ell(A)<\e$. 

In Proposition~\ref{p:zero} we shall prove that each set $A$ of zero length in a metric space $X$ is totally bounded and has 1-dimensional Hausdorff measure equal to zero. 

For a metric space $X$, denote by $\mathsf ZX$ the family of nonempty compact subsets of zero length in $X$. It is clear that each finite subset of $X$ has zero length, so $\mathsf FX\subseteq \mathsf ZX\subseteq \mathsf KX$.

Now we define the metric $d^1_{\mathsf ZX}$ on the set $\mathsf ZX$. Given two compact sets $A,B\in \mathsf ZX$, let ${\mathbf\Gamma}_{\!X\!}(A,B)$ be the family of graphs $\Gamma=(V,E)$ in $X$ such that 
\begin{itemize}
\item[(i)] $A\cup B\subseteq\overline{V}$;
\item[(ii)] $\Gamma$ has finitely many connected components;
\item[(iii)] for every connected component $C$ of $\Gamma$ we have $A\cap\overline{C}\ne\emptyset\ne B\cap\overline C$.
\end{itemize}
The conditions (i),(ii) imply that $A\cup B\subseteq\overline{V}=\bigcup_{x\in V}\overline{\Gamma(x)}$. 

Observe that the family $\mathbf \Gamma_{\!X\!}(A,B)$ contains the complete graph on the set $A\cup B$ and hence is not empty. 

For two compact subsets $A,B\in \mathsf ZX$, let 
$$d^1_{\mathsf ZX}(A,B):=\inf_{\Gamma\in\mathbf \Gamma_{\!X\!}(A,B)}\ell(\Gamma).$$

By a {\em completion} of a metric space $X$ we understand any complete metric space containing $X$ as a dense subspace. 
The following theorem is the main result of this paper.

\begin{theorem}\label{t:main} Let $X$ be a metric space and $d_X$ be its metric.
\begin{enumerate}
\item The function $d^1_{\mathsf ZX}$ is a well-defined metric on $\mathsf ZX$.
\item $d_{\mathsf KX}(A,B)\le d^1_{\mathsf ZX}(A,B)$ for any $A,B\in \mathsf ZX$.
\item $d^1_{\mathsf ZX}(A,B)=d^1_{\mathsf FX}(A,B)$ for any finite sets $A,B\in \mathsf FX$.
\item $\mathsf FX$ is a dense subset in the metric space $\mathsf Z^1\!X:=(\mathsf ZX,d^1_{\mathsf ZX})$.
\item If the metric space $X$ is complete, then so is the metric space $\mathsf Z^1\!X=(\mathsf ZX,d^1_{\mathsf ZX})$.
\item If $Y$ is a dense subspace in $X$, then $d^1_{\mathsf  ZY}(A,B)=d^1_{\mathsf ZX}(A,B)$ for any $A,B\in \mathsf ZY$.
\item If $\bar X$ is a completion of the metric space $X$, then $\mathsf Z^1\!\bar X$ is a completion of the metric space $\mathsf F^1\!X$.
\end{enumerate}
\end{theorem}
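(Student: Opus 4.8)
The plan is to deduce statement (7) by assembling the earlier parts (1)--(6), applied both to $X$ and to its completion $\bar X$. Recall that a completion of $\mathsf F^1\!X$ is a complete metric space containing $\mathsf F^1\!X$ as a dense subspace, so I must verify three things: that $\mathsf Z^1\!\bar X$ is complete, that the inclusion $\mathsf FX\hookrightarrow\mathsf Z\bar X$ isometrically embeds $\mathsf F^1\!X$ into $\mathsf Z^1\!\bar X$, and that the image $\mathsf FX$ is dense in $\mathsf Z^1\!\bar X$. Completeness is immediate: since $\bar X$ is a completion of $X$, it is a complete metric space, so statement (5) applied to $\bar X$ in place of $X$ says that $\mathsf Z^1\!\bar X$ is complete.

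For the isometric embedding, first note that every finite subset of $X$ is a finite subset of $\bar X$, so $\mathsf FX\subseteq\mathsf F\bar X\subseteq\mathsf Z\bar X$ and the inclusion makes sense. Fix $A,B\in\mathsf FX$. Applying statement (3) to $X$ gives $d^1_{\mathsf FX}(A,B)=d^1_{\mathsf ZX}(A,B)$. Since $X$ is dense in $\bar X$, statement (6), taken with $Y=X$ and $\bar X$ in the role of $X$, gives $d^1_{\mathsf ZX}(A,B)=d^1_{\mathsf Z\bar X}(A,B)$. Chaining these equalities yields $d^1_{\mathsf FX}(A,B)=d^1_{\mathsf Z\bar X}(A,B)$, i.e. the inclusion $\mathsf FX\hookrightarrow\mathsf Z\bar X$ is an isometry of $\mathsf F^1\!X$ onto its image in $\mathsf Z^1\!\bar X$.

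For density I would argue in two steps, using the triangle inequality for the metric $d^1_{\mathsf Z\bar X}$ supplied by statement (1). Fix $K\in\mathsf Z\bar X$ and $\e>0$. By statement (4) applied to $\bar X$, the set $\mathsf F\bar X$ is dense in $\mathsf Z^1\!\bar X$, so there is a finite set $B=\{b_1,\dots,b_n\}\in\mathsf F\bar X$ with $d^1_{\mathsf Z\bar X}(K,B)<\e/2$. Since $X$ is dense in $\bar X$, I may choose $a_i\in X$ with $d_{\bar X}(a_i,b_i)<\e/(2n)$ for each $i$, and set $A=\{a_1,\dots,a_n\}\in\mathsf FX$. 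By the very definition of $d^1_{\mathsf F\bar X}$, the map $\bar X^n\to\mathsf F\bar X$ is non-expanding for the $\ell^1$-metric, whence $d^1_{\mathsf F\bar X}(A,B)\le\sum_{i=1}^n d_{\bar X}(a_i,b_i)<\e/2$; applying statement (3) to $\bar X$ converts this into $d^1_{\mathsf Z\bar X}(A,B)=d^1_{\mathsf F\bar X}(A,B)<\e/2$. The triangle inequality then gives $d^1_{\mathsf Z\bar X}(K,A)<\e$, proving that $\mathsf FX$ is dense in $\mathsf Z^1\!\bar X$.

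I expect the genuine difficulty to lie entirely in parts (1)--(6), which this argument invokes; part (7) itself is a bookkeeping assembly. The one place demanding care is the isometric embedding: a priori the infimum defining $d^1_{\mathsf Z\bar X}(A,B)$ ranges over graphs whose vertices may lie in $\bar X\setminus X$, so it could in principle be strictly smaller than $d^1_{\mathsf ZX}(A,B)=d^1_{\mathsf FX}(A,B)$, leaving only a non-expanding map rather than an isometry. It is precisely statement (6), the invariance of the metric under passage to a dense superspace, that rules this out and guarantees the embedding is isometric.
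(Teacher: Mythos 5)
Your argument establishes only assertion (7) of the theorem, taking assertions (1)--(6) as given; you say so yourself (``the genuine difficulty \dots lie[s] entirely in parts (1)--(6)''). But the statement to be proved is the whole of Theorem~\ref{t:main}, and parts (1)--(6) are not prior results available for citation --- they \emph{are} the theorem, and essentially all of the paper's work goes into them: the well-definedness of $d^1_{\mathsf ZX}$ including the triangle inequality (proved by gluing graphs via carefully chosen connecting edges, Lemma~\ref{l:metric}), the lower bound by the Hausdorff metric (Lemma~\ref{l:H}), the identification $d^1_{\mathsf ZX}=d^1_{\mathsf FX}$ on finite sets (Lemma~\ref{l:BBKZ}, which rests on Theorem~30.4 of \cite{BBKZ} plus an approximation of infinite graphs by finite ones), density (Lemma~\ref{l:dense}), completeness (Lemma~\ref{l:complete}, via a Cauchy subsequence and telescoping unions $\widetilde\Gamma_k=\bigcup_{i\ge k}\Gamma_i$ of finite graphs), and invariance under dense extension (Lemma~\ref{l:equal}). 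As a proof of the stated theorem, the proposal therefore has a genuine gap: six of the seven claims, comprising all the substantive content, remain unproved.

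Within its limited scope, your derivation of (7) from (1)--(6) is correct and matches the paper's final lemma: completeness of $\mathsf Z^1\!\bar X$ from part (5) applied to $\bar X$, and the isometric embedding from the chain $d^1_{\mathsf FX}(A,B)=d^1_{\mathsf ZX}(A,B)=d^1_{\mathsf Z\bar X}(A,B)$ given by parts (3) and (6) --- exactly the chain the paper uses. Your density step is even a bit more scrupulous than a literal reading of the paper requires: part (4) applied to $\bar X$ only yields density of $\mathsf F\bar X$ in $\mathsf Z^1\!\bar X$, and you correctly bridge from $\mathsf F\bar X$ to $\mathsf FX$ using the non-expansion of the map $\bar X^n\to\mathsf F^1\!\bar X$, part (3) for $\bar X$, and the triangle inequality. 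The paper does not need this bridge because its Lemma~\ref{l:dense} is proved in the stronger form ``$\mathsf FY$ is dense in $\mathsf Z^1\!X$ for every dense $Y\subseteq X$,'' which applied to $X\subseteq\bar X$ gives density of $\mathsf FX$ in $\mathsf Z^1\!\bar X$ directly. So your handling of (7) is sound, but it covers only the easiest, purely bookkeeping seventh of the theorem; to complete the proof you would need to supply arguments for (1)--(6), for instance along the lines of the paper's six lemmas.
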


The proof of Theorem~\ref{t:main} is divided into seven lemmas.

\begin{lemma}\label{l:H} $d_{\mathsf KX}(A,B)\le d^1_{\mathsf ZX}(A,B)$ for any $A,B\in \mathsf ZX$.
\end{lemma}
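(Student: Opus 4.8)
The plan is to establish the stronger pointwise inequality $d_{\mathsf KX}(A,B)\le\ell(\Gamma)$ for every graph $\Gamma=(V,E)\in\mathbf \Gamma_{\!X\!}(A,B)$; taking the infimum over all such $\Gamma$ then yields the lemma. Since the Hausdorff metric is the maximum of two symmetric suprema and conditions (i)--(iii) are symmetric in $A$ and $B$, it suffices to bound $\sup_{a\in A}\inf_{b\in B}d_X(a,b)$ by $\ell(\Gamma)$.

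The combinatorial core is the claim that if $x,y$ are vertices lying in a single connected component $C$ of $\Gamma$, then $d_X(x,y)\le\ell(C)$, where $\ell(C)$ denotes the sum of $d_X$ over the edges contained in $C$. Indeed, connectedness supplies a finite walk $c_0=x,\dots,c_n=y$ with $\{c_{i-1},c_i\}\in E$, which we may reduce to a simple path (no repeated vertices), so that its edges are pairwise distinct; the triangle inequality then gives $d_X(x,y)\le\sum_{i=1}^n d_X(c_{i-1},c_i)\le\ell(C)$, the last step holding because the path uses distinct edges of $C$ and all summands are nonnegative. Note that each edge of $\Gamma$ joins two adjacent, hence co-componental, vertices, so $E$ is partitioned among the components and $\ell(C)\le\ell(\Gamma)$ for every component $C$; this also disposes of a possibly infinite edge set, since $\ell(C)$ is defined as a supremum over finite subfamilies while the path sum involves only finitely many edges.

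Next I would pass from vertices to closure points. Fix $a\in A$. By (i) and (ii) we have $a\in\overline V=\bigcup_{C}\overline C$, a finite union over the components, so $a\in\overline C$ for some component $C$; by (iii) there is a point $b\in B\cap\overline C$. For every $\e>0$, using $a,b\in\overline C$ choose vertices $x,y\in C$ with $d_X(a,x)<\e$ and $d_X(b,y)<\e$; combining this with the combinatorial claim yields $d_X(a,b)\le d_X(a,x)+d_X(x,y)+d_X(y,b)<2\e+\ell(C)\le 2\e+\ell(\Gamma)$. Letting $\e\to0$ gives $\inf_{b'\in B}d_X(a,b')\le d_X(a,b)\le\ell(\Gamma)$, and taking the supremum over $a\in A$ yields $\sup_{a\in A}\inf_{b'\in B}d_X(a,b')\le\ell(\Gamma)$. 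The symmetric inequality follows verbatim, which completes the bound $d_{\mathsf KX}(A,B)\le\ell(\Gamma)$.

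I expect the main obstacle to be the bookkeeping around closures rather than any deep idea: the hypotheses (i) and (iii) are phrased in terms of $\overline V$ and $\overline C$, so the points realizing the Hausdorff distance need not lie in $V$, $A$, or $B$, and one must approximate them by genuine vertices while keeping the additive error controlled. The only other point requiring care is justifying $\ell(C)\le\ell(\Gamma)$ for infinite $E$, which is handled cleanly by the supremum definition of total length.
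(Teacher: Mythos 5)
Your proof is correct and follows essentially the same route as the paper's: approximate $a\in A$ and a point $b\in B\cap\overline C$ by vertices of the component $C$, reduce a connecting walk to a simple path so its edges are distinct, and apply the triangle inequality to get $d_X(a,b)\le 2\e+\ell(\Gamma)$. The only difference is presentational — you argue directly that $d_{\mathsf KX}(A,B)\le\ell(\Gamma)$ for every $\Gamma\in\mathbf\Gamma_{\!X\!}(A,B)$ and take the infimum, whereas the paper packages the identical estimate as a proof by contradiction.
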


\begin{proof}  To derive a contradiction, assume that $d_{\mathsf KX}(A,B)>d^1_{\mathsf ZX}(A,B)$ for some compact sets $A,B\in \mathsf ZX$. By the definition of $d^1_{\mathsf ZX}$, there exists a graph $\Gamma\in\mathbf \Gamma_{\!X\!}(A,B)$ such that $\ell(\Gamma)<d_{\mathsf KX}(A,B)$. Choose a positive real number $\e$ such that $\ell(\Gamma)+2\e<d_{\mathsf KX}(A,B)$. Since $\Gamma$ has finitely many connected components and $A\cup B\subseteq\overline V$, for any point $a\in A$ there exists a connected component $C$ of the graph $\Gamma$ such that $a\in\overline C$ . By the definition of the family $\mathbf \Gamma_{\!X\!}(A,B)$, the intersection $\overline C\cap B$ contains some point $b'\in B$. Since $a,b'\in\overline C$, there are points $c,c'\in C$ such that $d_X(a,c)<\e$ and $d_X(b',c')<\e$. Since the set $C$ is connected in the graph $\Gamma=(V,E)$, there exists a sequence $c_0,\dots,c_n\in C$ of pairwise distinct points such that $c_0=c$, $c_n=c'$, and $\{c_{i-1},c_i\}\in E$ for all $i\in\{1,\dots,n\}$. Since the points $c_0,\dots,c_n$ are pairwise distinct, the edges $\{c_{0},c_1\},\{c_1,c_2\},\dots,\{c_{n-1},c_n\}$ of the graph $\Gamma$ are pairwise distinct and then 
$$d_X(a,b')\le d_X(a,c_0)+\sum_{i=1}^nd_X(c_{i-1},c_i)+d_X(c_n,c')<\e+\ell(\Gamma)+\e.$$
Then $\min_{b\in B}d_X(a,b)\le d_X(a,b')< 2\e+\ell(\Gamma)$ and $\max_{a\in A}\min_{b\in B}<2\e+\ell(\Gamma)$. By analogy we can prove that $\max_{b\in B}\min_{a\in A}d_X(b,a)<2\e+\ell(\Gamma)$.
Then $$d_{\mathsf KX}(A,B)=\max\{\max_{a\in A}\min_{b\in B}d(a,b),\max_{b\in B}\min_{a\in A}d(b,a)\}<2\e+\ell(\Gamma)<d_{\mathsf KX}(A,B),$$
which is a desired contradiction completing the proof of the lemma.
\end{proof}

\begin{lemma}\label{l:metric} $d^1_{\mathsf ZX}$ is a well-defined metric on $\mathsf ZX$.
\end{lemma}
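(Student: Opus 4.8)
The plan is to verify the three metric axioms for $d^1_{\mathsf ZX}$ on the family $\mathsf ZX$ of nonempty compact sets of zero length. Before that, I would first confirm that $d^1_{\mathsf ZX}(A,B)$ is actually a well-defined \emph{finite} number for any $A,B\in\mathsf ZX$: since both $A$ and $B$ have zero length, for any $\e>0$ I can pick graphs $\Gamma_A\in\mathbf\Gamma_{\!X\!}(A)$ and $\Gamma_B\in\mathbf\Gamma_{\!X\!}(B)$ of total length less than $\e$, and then join their finitely many connected components together, adding finitely many edges whose endpoints lie near points of $A$ and $B$, to build a graph in $\mathbf\Gamma_{\!X\!}(A,B)$ with finite total length. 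The cost of these joining edges is controlled by the Hausdorff distance $d_{\mathsf KX}(A,B)$, which is finite because $A,B$ are compact, so $d^1_{\mathsf ZX}(A,B)<\infty$. Nonnegativity is immediate from the definition of $\ell(\Gamma)$ as a sum (or supremum of sums) of nonnegative distances.

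Next I would establish the identity axiom: $d^1_{\mathsf ZX}(A,B)=0$ if and only if $A=B$. The direction $A=B\Rightarrow d^1_{\mathsf ZX}(A,B)=0$ follows from the zero-length hypothesis, since a graph witnessing that $A$ has small length can be arranged (its components all meet $\overline A=\overline B$) to lie in $\mathbf\Gamma_{\!X\!}(A,A)$; more carefully, one takes $\Gamma\in\mathbf\Gamma_{\!X\!}(A)$ of length $<\e$ whose every component meets $A$, which belongs to $\mathbf\Gamma_{\!X\!}(A,A)$. The converse is where Lemma~\ref{l:H} does the crucial work: if $d^1_{\mathsf ZX}(A,B)=0$, then $d_{\mathsf KX}(A,B)\le d^1_{\mathsf ZX}(A,B)=0$, and since $d_{\mathsf KX}$ is a genuine metric on the compact sets $\mathsf KX$, we conclude $A=B$. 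Symmetry, $d^1_{\mathsf ZX}(A,B)=d^1_{\mathsf ZX}(B,A)$, is transparent because conditions (i)--(iii) defining $\mathbf\Gamma_{\!X\!}(A,B)$ are symmetric in $A$ and $B$, so $\mathbf\Gamma_{\!X\!}(A,B)=\mathbf\Gamma_{\!X\!}(B,A)$ and the two infima coincide.

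The triangle inequality is the main obstacle and the only step requiring real construction. Given $A,B,C\in\mathsf ZX$ and $\e>0$, I would choose graphs $\Gamma_1\in\mathbf\Gamma_{\!X\!}(A,B)$ and $\Gamma_2\in\mathbf\Gamma_{\!X\!}(B,C)$ with $\ell(\Gamma_1)<d^1_{\mathsf ZX}(A,B)+\e$ and $\ell(\Gamma_2)<d^1_{\mathsf ZX}(B,C)+\e$, and then form the union graph $\Gamma=\Gamma_1\cup\Gamma_2$ (taking the union of vertex sets and edge sets). Clearly $A\cup C\subseteq A\cup B\cup C\subseteq\overline{V_1\cup V_2}$ and $\ell(\Gamma)\le\ell(\Gamma_1)+\ell(\Gamma_2)$, so conditions (i) and (ii) are easy. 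The delicate point is condition (iii): I must check that every connected component $D$ of $\Gamma$ satisfies $\overline D\cap A\ne\emptyset\ne\overline D\cap C$. The idea is that the set $B$ acts as a bridge — each component of $\Gamma_1$ touches $B$ and each component of $\Gamma_2$ touches $B$, so in the union any component that meets $A$ (via a $\Gamma_1$-piece) can be traced through shared vertices near $B$ into a $\Gamma_2$-piece reaching $C$. The technical work here is that components of $\Gamma$ may merge several pieces of $\Gamma_1$ and $\Gamma_2$, and I must argue (likely using compactness of $B$ together with the fact that closures of components cover $B$) that every resulting component of the union still meets both $A$ and $C$; possibly one enlarges $\Gamma$ slightly by adding short edges joining nearby vertices to ensure the bridging actually occurs at the graph level rather than merely in closure. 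Once condition (iii) is secured, $\Gamma\in\mathbf\Gamma_{\!X\!}(A,C)$ gives $d^1_{\mathsf ZX}(A,C)\le\ell(\Gamma)<d^1_{\mathsf ZX}(A,B)+d^1_{\mathsf ZX}(B,C)+2\e$, and letting $\e\to0$ yields the triangle inequality.
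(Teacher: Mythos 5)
Your overall route coincides with the paper's: finiteness of $d^1_{\mathsf ZX}$ by joining the graphs witnessing zero length, $d^1_{\mathsf ZX}(A,A)=0$ from the zero-length property after discarding components whose closures miss $A$, positivity for $A\ne B$ from Lemma~\ref{l:H} together with the fact that the Hausdorff metric is a metric, symmetry from the symmetry of the definition, and the triangle inequality by combining near-optimal graphs $\Gamma_1\in\mathbf\Gamma_{\!X\!}(A,B)$ and $\Gamma_2\in\mathbf\Gamma_{\!X\!}(B,C)$. The one place where your write-up is not yet a proof is exactly the step you yourself flagged: condition (iii) for the combined graph. Your primary claim --- that in the plain union $\Gamma_1\cup\Gamma_2$ a component meeting $A$ ``can be traced through shared vertices near $B$'' into a piece of $\Gamma_2$ --- fails, because $\Gamma_1$ and $\Gamma_2$ generically have disjoint vertex sets; then the connected components of the union are simply the components of $\Gamma_1$ together with those of $\Gamma_2$, and the closure of a $\Gamma_1$-component meets $A$ and $B$ but in general not $C$. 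Two components whose closures share a point of $B$ are still different components: common closure points do not merge anything at the graph level. So your parenthetical fallback (``possibly one enlarges $\Gamma$ slightly by adding short edges'') is not optional --- it is the entire content of the step, and it is precisely what the paper's proof does with its functions $f,g,f',g'$.

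To make it precise: for each of the finitely many components $K$ of $\Gamma_1$ pick $b_K\in B\cap\overline K$ (condition (iii) for $\Gamma_1$); since $B\subseteq\overline{V_2}$ and the closures of the finitely many components of $\Gamma_2$ cover $\overline{V_2}$, pick a component $K'$ of $\Gamma_2$ with $b_K\in\overline{K'}$, then vertices $u\in K$, $v\in K'$ with $d_X(u,b_K)<\delta$ and $d_X(v,b_K)<\delta$, and add the edge $\{u,v\}$ of length $<2\delta$; do the symmetric construction for each component of $\Gamma_2$. Every component of the enlarged graph then contains at least one component of $\Gamma_1$ and at least one of $\Gamma_2$, hence its closure meets both $A$ and $C$; the number of components stays finite; and the total added length is at most $2\delta$ times the number of components, which is $<\e$ for small $\delta$. (Compactness of $B$, which you invoke, is not needed --- only the covering of $B$ by closures of components.) This gives $d^1_{\mathsf ZX}(A,C)\le d^1_{\mathsf ZX}(A,B)+d^1_{\mathsf ZX}(B,C)+3\e$, which suffices as $\e\to0$; note that your stated bound $2\e$ also forgot the length of the added bridging edges --- a harmless accounting slip, but only once the bridging construction is actually carried out.
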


\begin{proof} Given any sets $A,B,C\in\mathsf ZX$, we need to verify the three axioms of metric:
\begin{enumerate}
\item $0\le d^1_{\mathsf ZX}(A,B)<\infty$ and $d^1_{\mathsf ZX}(A,B)=0$ iff $A=B$,
\item $d^1_{\mathsf ZX}(A,B)=d_{\mathsf ZX}^1(B,A)$,
\item $d^1_{\mathsf ZX}(A,B)\le d^1_{\mathsf ZX}(A,C)+d^1_{\mathsf ZX}(C,B)$.
\end{enumerate}
\smallskip

1. First we show that $d^1_{\mathsf ZX}(A,A)=0$ for any $A\in \mathsf ZX$. Since the set $A$ has zero length, for any $\e>0$ there exists a graph $\Gamma=(V,E)$ in $X$ with finitely many connected components such that $A\subseteq\overline V$ and $\ell(\Gamma)<\e$. Replacing $\Gamma$ by a suitable subgraph, we can assume that the closure of each connected component of $\Gamma$ intersects the set $A$. Then $A\in\mathbf \Gamma_{\!X\!}(A,A)$ and hence
$$d_{\mathsf ZX}^1(A,A)\le\ell(\Gamma)<\e.$$
Since $\e>0$ was arbitrary, $d_{\mathsf ZX}^1(A,A)=0$.

If sets $A,B\in \mathsf ZX$ are distinct, then by Lemma~\ref{l:H}, $d_{\mathsf ZX}^1(A,B)\ge d_{\mathsf KX}(A,B)>0$ (as the Hausdorff metric $d_{\mathsf KX}$ is a metric).
 
The proof of the first axiom of metric will be complete as soon as we check that $d^1_{\mathsf ZX}(A,B)<\infty$ for any $A,B\in \mathsf ZX$. Since the sets $A,B$ have zero length, there exist graphs $\Gamma_A=(V_A,E_A)$ and $\Gamma_B=(V_B,E_B)$ with finitely many connected components such that $A\subseteq\overline V_{\!A}$, $B\subseteq\overline V_{\!B}$ and $\ell(\Gamma_A)+\ell(\Gamma_B)<1$. Let $D$ be a finite subset of $V_A\cup V_B$ intersecting every connected component of the graphs $\Gamma_A$ and $\Gamma_B$. Consider the graph $\Gamma=(V,E)$ where $V=V_A\cup V_B$ and $E=E_A\cup E_B\cup E_D$ where $E_D:=\{e\subseteq D:|e|=2\}$. It is easy to see that the graph $\Gamma$ is connected and belongs to the family $\mathbf \Gamma_{\!X\!}(A,B)$. Then $$
d^1_{\mathsf ZX}(A,B)\le \ell(\Gamma)\le \ell(\Gamma_A)+\ell(\Gamma_B)+\sum_{\{x,y\}\in E_D}d_X(x,y)<\infty.$$

2. The definition of the distance $d^1_{\mathsf ZX}$ implies that $d^1_{\mathsf ZX}(A,B)=d^1_{\mathsf ZX}(B,A)$ for any $A,B\in \mathsf ZX$.
\smallskip

3. Finally we check the triangle inequality for $d^1_{\mathsf ZX}$. Given any $A,B,C\in \mathsf ZX$ and $\e>0$, it suffices to show that 
$$d^1_{\mathsf ZX}(A,C)\le d^1_{\mathsf ZX}(A,B)+d^1_{\mathsf ZX}(B,C)+4\e.$$ By the definition of the distances $d^1_{\mathsf ZX}(A,B)$ and $d^1_{\mathsf ZX}(B,C)$, there exist  graphs $\Gamma\in\mathbf \Gamma_{\!X\!}(A,B)$ and $\Gamma'\in\mathbf \Gamma_{\!X\!}(B,C)$ such that $\ell(\Gamma)<d^1_{\mathsf ZX}(A,B)+\e$ and $\ell(\Gamma')<d^1_{\mathsf ZX}(B,C)+\e$. By the definition of the families $\mathbf \Gamma_{\!X\!}(A,B)$ and $\mathbf\Gamma_{\!X\!}(B,C)$, the graphs $\Gamma=(V,E)$ and $\Gamma'=(V',E')$ have finitely many connected components and their closures meet the sets $A,B$ and $B,C$, respectively. 


Fix a finite set $D\subseteq V$ intersecting all connected components of the graph $\Gamma$ and a finite set $D'\subseteq V'$ intersecting all connected components of the graph $\Gamma'$. Fix a function $f:D\to B$ assigning to each point $x\in D$ a point $f(x)\in B\cap\overline{\Gamma(x)}$. Since $B\subseteq\overline V=\bigcup_{x\in V}\overline{\Gamma(x)}$,  for every $b\in B$ there exists a point $g(b)\in V$ such that  $b\in \overline{\Gamma(g(b))}$. Since $b\in \overline{\Gamma(g(b))}$ we can replace $g(b)$ by a suitable point in the connected component $\Gamma(g(b))$ and additionally assume that $d(b,g(b))<\e/{|D|}$. Next, do the same for the graph $\Gamma'$: choose a function $f':D'\to B$ such that  $f(x)\in B\cap\overline{\Gamma'(x)}$ for every $x\in D'$, and a function $g':B\to V'$ such that $b\in \overline{\Gamma'(g'(b))}$ and $d(b,g'(b))<\e/|D'|$ for every $b\in B$.
Consider the graph $\Gamma''=(V'',E'')$ where $V''=V\cup V'$ and $$E''=E\cup E'\cup\big\{\{f(x),g'(f(x))\}:x\in D\big\}\cup\big\{\{f'(x),g(f'(x))\}:x\in D'\big\}.$$ It can be shown that $\Gamma''\in\mathbf \Gamma_{\!X\!}(A,C)$ and hence
\begin{multline*}
d^1_{\mathsf ZX}(A,C)\le\ell(\Gamma'')\le\ell(\Gamma)+\ell(\Gamma')+\sum_{x\in D}d\big(f(x),g'(f(x))\big)+\sum_{x\in D'}d\big(f'(x),g(f'(x))\big)<\\
\big(d^1_{\mathsf ZX}(A,B)+\e\big)+\big(d^1_{\mathsf ZX}(B,C)+\e\big)+|D|\cdot\frac{\e}{|D|}+|D'|\cdot\frac{\e}{|D'|}=d^1_{\mathsf ZX}(A,B)+d^1_{\mathsf ZX}(B,C)+4\e.
\end{multline*}
\end{proof}

Given any finite sets, $A,B\in \mathsf FX$, let $\mathbf \Gamma^{\mathsf f}_{\!X\!}(A,B)$ be the subfamily of finite graphs in $\mathbf \Gamma_{\!X\!}(A,B)$. 

\begin{lemma}\label{l:BBKZ} $d^1_{\mathsf ZX}(A,B)=d^1_{\mathsf FX}(A,B)=\inf\limits_{\Gamma\in\mathbf \Gamma^{\mathsf f}_{\!X\!}(A,B)}\ell(\Gamma)$ for all $A,B\in \mathsf FX$.
\end{lemma}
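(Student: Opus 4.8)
The three quantities to be identified are $L:=d^1_{\mathsf ZX}(A,B)=\inf_{\Gamma\in\mathbf \Gamma_{\!X\!}(A,B)}\ell(\Gamma)$, the distance $D:=d^1_{\mathsf FX}(A,B)$, and the finite-graph infimum $L^{\mathsf f}:=\inf_{\Gamma\in\mathbf \Gamma^{\mathsf f}_{\!X\!}(A,B)}\ell(\Gamma)$. Since every finite graph has finitely many connected components, $\mathbf \Gamma^{\mathsf f}_{\!X\!}(A,B)\subseteq\mathbf \Gamma_{\!X\!}(A,B)$ and hence $L\le L^{\mathsf f}$ is immediate. The plan is therefore to establish the three inequalities $L^{\mathsf f}\le L$, $D\le L^{\mathsf f}$ and $L^{\mathsf f}\le D$, which together yield $L=D=L^{\mathsf f}$. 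The identification $D=L^{\mathsf f}$ is essentially the content of \cite{BBKZ}; I sketch a self-contained route.

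\emph{Step 1: $L^{\mathsf f}\le L$ (finite approximation).} Given $\Gamma=(V,E)\in\mathbf \Gamma_{\!X\!}(A,B)$ with $\ell(\Gamma)<L+\e$, I would manufacture a finite graph $\Gamma'\in\mathbf \Gamma^{\mathsf f}_{\!X\!}(A,B)$ of almost the same length. The key point is that for a \emph{finite} graph condition (i) forces $A\cup B\subseteq V'$ literally (as $\overline{V'}=V'$), so each point of $A\cup B$ must be turned into a genuine vertex. Since $A,B$ are finite and $\Gamma$ has finitely many components, for each component $C$ and each $t\in(A\cup B)\cap\overline C$ I would pick a vertex $c_{t,C}\in C$ with $d_X(t,c_{t,C})<\delta$, connect the finitely many chosen vertices inside each $C$ by finite paths (possible as $C$ is connected, and these edges lie in $E$ so cost nothing extra beyond $\ell(\Gamma)$), and attach every $t$ to the corresponding $c_{t,C}$ by a short edge. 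By condition (iii) every component of the resulting $\Gamma'$ still meets both $A$ and $B$, and $\ell(\Gamma')\le\ell(\Gamma)+|A\cup B|\cdot(\#\text{components})\cdot\delta$; letting $\delta\to0$ gives $L^{\mathsf f}\le L$, hence $L=L^{\mathsf f}$.

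\emph{Step 2: $D\le L^{\mathsf f}$ (the heart).} Two facts follow directly from the defining non-expandingness of $x\mapsto x[n]$: (Fact 1) for $S\in\mathsf FX$, $p\in S$, $q\in X$ one has $d^1_{\mathsf FX}(S,(S\setminus\{p\})\cup\{q\})\le d_X(p,q)$ (list $S$ and replace the entry $p$ by $q$); (Fact 2) $d^1_{\mathsf FX}(A_0\cup A_1,B_0\cup B_1)\le d^1_{\mathsf FX}(A_0,B_0)+d^1_{\mathsf FX}(A_1,B_1)$ (append a fixed listing of the untouched set to the tuples of a realizing chain), whence in particular $d^1_{\mathsf FX}(\{v\}\cup A_1,\{v\}\cup B_1)\le d^1_{\mathsf FX}(A_1,B_1)$. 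Given $\Gamma\in\mathbf \Gamma^{\mathsf f}_{\!X\!}(A,B)$ with components $C_1,\dots,C_r$, condition (iii) gives $A\cap C_i\ne\emptyset\ne B\cap C_i$ and $A=\bigsqcup_i(A\cap C_i)$, $B=\bigsqcup_i(B\cap C_i)$, so by Fact 2 it suffices to bound each $d^1_{\mathsf FX}(A\cap C_i,B\cap C_i)$ by the length of a spanning tree $T_i\subseteq C_i$. Everything thus reduces to the claim: for a finite tree $T$ and nonempty $A',B'\subseteq V(T)$, $d^1_{\mathsf FX}(A',B')\le\ell(T)$, which I would prove by induction on $|V(T)|$. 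If some leaf $v$ lies outside $A'\cap B'$, delete it: move (Fact 1) the label of $v$ to its neighbour when $v$ lies in exactly one of $A',B'$, and do nothing when $v$ lies in neither; either way the reduction to $T-v$ costs at most the deleted edge. If every leaf lies in $A'\cap B'$ then $|A'|,|B'|\ge2$, so picking any leaf $v$ and applying Fact 2 with the common point $v$ reduces to $A'\setminus\{v\},B'\setminus\{v\}$ on $T-v$ \emph{without paying for the edge at $v$}. The hard part is exactly this leaf-selection: the naive strategy of growing the whole tree out of $A'$ and then pruning down to $B'$ traverses each edge twice and loses a factor of $2$; the induction must leave any $A'\cap B'$-leaf untouched and never charge its edge.

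\emph{Step 3: $L^{\mathsf f}\le D$ (maximality).} Here I would exploit that $d^1_{\mathsf FX}$ is by definition the \emph{largest} metric making every $x\mapsto x[n]$ non-expanding, so it suffices to check that $L^{\mathsf f}$ is itself such a metric. Symmetry is clear; $L^{\mathsf f}(A,A)=0$ is witnessed by the edgeless graph $(A,\emptyset)$; positivity for $A\ne B$ follows from $L^{\mathsf f}\ge L\ge d_{\mathsf KX}(A,B)>0$ by Lemma~\ref{l:H}; the triangle inequality is obtained by gluing a graph in $\mathbf \Gamma^{\mathsf f}_{\!X\!}(A,B)$ and one in $\mathbf \Gamma^{\mathsf f}_{\!X\!}(B,C)$ along their common vertex set $B$ (finiteness makes $B\subseteq V$ literally, so no $\e/|D|$ limiting argument is needed, in contrast with Lemma~\ref{l:metric}). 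Finally, for $x,y\in X^n$ the graph with vertices $\{x(i),y(i):i\in n\}$ and edges $\{x(i),y(i)\}$ belongs to $\mathbf \Gamma^{\mathsf f}_{\!X\!}(x[n],y[n])$ — every component meets both $x[n]$ and $y[n]$ since each $x(i)$ is joined to $y(i)$ — and has length $\le\sum_i d_X(x(i),y(i))=d^1_{X^n}(x,y)$, so the map is non-expanding with respect to $L^{\mathsf f}$. Maximality then gives $L^{\mathsf f}\le D$, completing $D=L^{\mathsf f}=L$.
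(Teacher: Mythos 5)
Your argument is correct, but it proves more than the paper does, and the comparison is worth recording. The paper's proof of Lemma~\ref{l:BBKZ} has only two ingredients: the equality $d^1_{\mathsf FX}(A,B)=\inf_{\Gamma\in\mathbf\Gamma^{\mathsf f}_{\!X\!}(A,B)}\ell(\Gamma)$ is not proved but quoted from Theorem~30.4 of \cite{BBKZ}, and the equality of the two graph infima is established by exactly your Step~1 (pick vertices close to the points of $A\cup B$, travel along existing edges of each component so that their cost is already counted in $\ell(\Gamma)$, add short attachment edges, giving $I_{\mathsf f}\le I+5\e$). So your Step~1 reproduces the paper's own argument, while your Steps~2 and~3 are a self-contained replacement for the citation: Step~3 is the natural maximality argument (check that $L^{\mathsf f}$ is a metric for which every $x\mapsto x[n]$ is non-expanding, then use that $d^1_{\mathsf FX}$ is the largest such metric), and Step~2's componentwise splitting plus leaf-pruning tree induction is the combinatorial heart that \cite{BBKZ} supplies. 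The paper's route buys brevity; yours buys independence from an unpublished preprint. The one place where your sketch claims more than it shows is Fact~2: it does not follow ``directly from the defining non-expandingness,'' because your parenthetical proof (append a fixed listing of the untouched set to the tuples of a realizing chain) presupposes the chain-infimum description of $d^1_{\mathsf FX}$, namely $d^1_{\mathsf FX}(A,B)=\inf\sum_j d^1_{X^{n_j}}(x_j,y_j)$ over finite chains of tuples joining $A$ to $B$. That description is true, but it must itself be derived from maximality: every metric with the non-expanding property is dominated by the chain pseudometric, and the chain pseudometric separates points because it dominates the metric $d^1_{\mathsf FX}$, hence equals it. With that justification inserted, everything checks out: in the tree induction the delicate case where every leaf lies in $A'\cap B'$ is safe, since a tree with at least two vertices has at least two leaves, so $A'\setminus\{v\}$ and $B'\setminus\{v\}$ remain nonempty; and the gluing proof of the triangle inequality for $L^{\mathsf f}$ is valid because finiteness forces $B$ to lie literally inside both vertex sets.
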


\begin{proof} Fix any finite sets $A,B\in \mathsf FX$ and put $I=\inf\limits_{\Gamma\in\mathbf \Gamma_{\!X\!}(A,B)}\ell(\Gamma)$ and $I_{\mathsf f}=\inf\limits_{\Gamma\in\mathbf \Gamma^{\mathsf f}_{\!X\!}(A,B)}\ell(\Gamma)$. The equality $d^1_{\mathsf FX}(A,B)=I_{\mathsf f}$ was proved in Theorem~30.4 in \cite{BBKZ}. So, it suffices to show that $I=I_{\mathsf f}$. The inequality $I\le I_{\mathsf f}$ is trivial and follows from the inclusion $\mathbf \Gamma^{\mathsf f}_{\!X\!}(A,B)\subseteq \mathbf \Gamma_{\!X\!}(A,B)$. The inequality $I_{\mathsf f}\le I$ will follow as soon as we show that $I_{\mathsf f}\le I+5\e$ for any $\e>0$. Given any $\e>0$, find a graph $\Gamma\in\mathbf\Gamma_{\!X\!}(A,B)$ such that $\ell(\Gamma)<I+\e$.

By the definition of the family $\mathbf\Gamma_{\!X\!}(A,B)$, for every $a\in A$ we can find a point $v(a)\in V$ such that $a\in \overline{\Gamma(v(a))}$ and $B\cap\overline{\Gamma(v(a))}$ contains some point $\beta(a)$. Since $\beta(a)\in\overline{\Gamma(v(a))}$, there exists a point $u(a)\in \Gamma(v(a))$ such that $d_X(u(a),\beta(a))<\e/|A|$.  Since $a\in \overline{\Gamma(f(x))}$, we can replace $v(a)$ by a suitable point in the connected component $\Gamma(v(a))$ and additionally assume that $d_X(a,v(a))<\e/|A|$. Since the points $v(a),u(a)$ belong to the same connected component of the graph $\Gamma$, there exist a number $n_a\in\IN$ and a sequence $v_0(a),\dots,v_{n_a}(a)\in V$ such that $v_0(a)=v(a)$, $v_{n_a}(a)=u(a)$ and $\{v_{i-1}(a),v_i(a)\}\in E$ for every $i\in\{1,\dots,n_a\}$.

Now do the same with the set $B$: for every point $b\in B$ choose points $\alpha(b)\in A$ and $v'(b),u'(b)\in V$ such that $b\in \overline{\Gamma(v'(b))}$, $\alpha(b)\in A\cap\overline{\Gamma(v'(b))}$, $d_X(b,v'(b))<\e/|B|$, $u'(b)\in\Gamma(v'(b))$,  and $d_X(\alpha(b),u'(b))<\e/|B|$. Since the points $v'(b),u'(b)$ belong to the same connected component of the graph $\Gamma$, there exist $m_a\in\IN$ and a sequence $v_0'(b),\dots,v'_{m_b}(b)\in V$ such that $v'_0(b)=v'(b)$, $v'_{m_b}(b)=u'(b)$ and $\{v'_{i-1}(b),v'_i(b)\}\in E$ for every $i\in\{1,\dots,m_a\}$.

Now consider the finite graph $\Gamma'=(V',E')$ with the set of vertices
$$V'=A\cup B\cup\bigcup_{a\in A}\{v_i(a):1\le i\le n_a\}\cup \bigcup_{b\in B}\{v'_i(b):1\le i\le m_a\}$$ and the set of edges
\begin{multline*}
E'=\Big(\bigcup_{a\in A}\big\{\{a,v(a)\},\{u(a),\beta(a)\},\{v_{i-1}(a),v_i(a)\}:1\le i\le n_a\big\}\Big)\cup\\
\Big(\bigcup_{b\in B}\big\{\{b,v'(b)\},\{u'(b),\alpha(b)\},\{v'_{i-1}(b),v'_i(b)\}:1\le i\le m_a\big\}\Big).
\end{multline*}
It is easy to see that $\Gamma'\in\mathbf\Gamma^{\mathsf f}_{\!X\!}(A,B)$ and hence
\begin{multline*}
I_{\mathsf f}\le \ell(\Gamma')\le\\ \ell(\Gamma)+\sum_{a\in A}\big(d_X(a,v(a))+d_X(u(a),\beta(a))\big)+\sum_{b\in B}\big(d_X(b,v'(b))+d_X(\alpha(b),u'(b))\big)<\\
I+\e+2\e+2\e=I+5\e.
\end{multline*}
\end{proof}

\begin{lemma}\label{l:dense}  For any dense subset $Y\subseteq X$, the set $\mathsf FY$ is dense in the metric space $\mathsf Z^1\!X=(\mathsf ZX,d^1_{\mathsf ZX})$.
\end{lemma}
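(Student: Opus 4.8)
The plan is to fix an arbitrary compact set $A\in\mathsf ZX$ and $\e>0$ and to produce a finite set $F\in\mathsf FY$ together with a graph $\Gamma'\in\mathbf\Gamma_{\!X\!}(A,F)$ witnessing $d^1_{\mathsf ZX}(A,F)\le\ell(\Gamma')<\e$. The source of economy is the zero-length hypothesis on $A$: for a small parameter $\delta>0$ (to be fixed first) there is a graph $\Gamma=(V,E)\in\mathbf\Gamma_{\!X\!}(A)$ with finitely many connected components and $\ell(\Gamma)<\delta$. Using that $\Gamma$ has finitely many components, so that $\overline V=\bigcup_{C}\overline C$ over the components $C$, I would first discard every component whose closure misses $A$. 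This does not destroy the inclusion $A\subseteq\overline V$ (each point of $A$ lies in $\overline C$ for some retained $C$) and only decreases the length, so I may assume that the closure of every component of $\Gamma$ meets $A$.

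Next I would invoke density of $Y$. Let $N$ be the finite number of components of the trimmed graph $\Gamma$. For each component $C$ pick a vertex $x_C\in C$ and, by density of $Y$ in $X$, a point $y_C\in Y$ with $d_X(x_C,y_C)<\eta$ for a second parameter $\eta>0$. Set $F=\{y_C:C\text{ a component of }\Gamma\}$, a nonempty finite subset of $Y$, and form the graph $\Gamma'=(V',E')$ with $V'=V\cup F$ and $E'=E\cup\big\{\{x_C,y_C\}:C\text{ a component of }\Gamma\big\}$.

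The verification that $\Gamma'\in\mathbf\Gamma_{\!X\!}(A,F)$ is then routine. Attaching each new vertex $y_C$ to $x_C\in C$ by a single edge keeps the number of components at most $N$ (it can only merge components, never create new ones), giving condition (ii); condition (i) holds since $A\subseteq\overline V\subseteq\overline{V'}$ and $F\subseteq V'$; and for (iii) every component $C'$ of $\Gamma'$ contains at least one original component whose closure meets $A$ and contains an attached point $y_C\in F$, so $A\cap\overline{C'}\ne\emptyset\ne F\cap\overline{C'}$. Finally $\ell(\Gamma')\le\ell(\Gamma)+\sum_{C}d_X(x_C,y_C)<\delta+N\eta$. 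Choosing $\delta<\e/2$ first (which fixes $N$) and then $\eta<\e/(2N)$ yields $\ell(\Gamma')<\e$, hence $d^1_{\mathsf ZX}(A,F)<\e$, and density of $\mathsf FY$ in $\mathsf Z^1\!X$ follows.

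The main point needing care is the bookkeeping of connected components under the two modifications. One must check that trimming preserves $A\subseteq\overline V$ and that hanging a pendant $Y$-vertex on each surviving component neither increases the component count nor spoils condition (iii); both facts rest on the finiteness of the component set (so that $\overline V=\bigcup_C\overline C$) and on the fact that each $y_C$ is joined to a vertex of an existing component. Everything else reduces to the single length estimate above and one application of density, so I expect no further obstacles.
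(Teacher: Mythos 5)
Your proof is correct and follows essentially the same route as the paper's: both start from a zero-length witness graph for $A$, arrange that the closure of every connected component meets $A$, select one representative point per component, and move those representatives into $Y$ by density. The only difference is bookkeeping: you fuse everything into a single graph $\Gamma'$ by attaching pendant $Y$-vertices (giving $d^1_{\mathsf ZX}(A,F)<\e$ directly), whereas the paper keeps two graphs --- one showing $A$ is $\e$-close to the finite set of representatives, one showing that set is $\e$-close to a subset of $Y$ --- and combines them via the triangle inequality of Lemma~\ref{l:metric}, obtaining the bound $2\e$.
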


\begin{proof} Given any $A\in \mathsf ZX$ and $\e>0$, it suffices to find a set $B\in\mathsf FY$ such that $d^1_{\mathsf ZX}(A,B)<2\e$. Since $\ell(A)=0$, there exists a graph $\Gamma=(V,E)$ in $X$ such that $\Gamma$ has finitely many connected components, $A\subseteq \overline{V}$ and $\ell(A)<\e$. Choose a finite set $B'\subseteq V$ that meets each connected component of the graph $\Gamma$ and consider the subset $B''=\{b\in B':\overline{\Gamma(b)}\cap A\ne\emptyset\}$. It is easy to see that $\Gamma\in\mathbf \Gamma_{\!X\!}(A,B'')$ and hence $d^1_{\mathsf ZX}(A,B'')\le\ell(\Gamma)<\e$. 

Using the density of the set $Y$ in $X$, choose a finite set $B\subseteq Y$ and a surjective function $f:B''\to B$ such that $d_X(x,f(x))<\e/|B''|$ for all $x\in B''$. Consider the graph $\Gamma'=(V',E')$ with the set of vertices $V'=B''\cup f(B'')$ and the set of edges $E'=\{\{x,f(x)\}:x\in B''\}$. Observe that $\Gamma'\in\mathbf \Gamma_{\!X\!}(B'',B)$ and hence $d^1_{\mathsf ZX}(B,B'')\le\ell(\Gamma')<\sum_{x\in B''}d_X(x,f(x))<\e$. Then $$d^1_{\mathsf ZX}(A,B)\le d^1_{\mathsf ZX}(A,B'')+d^1_{\mathsf ZX}(B'',B)<\e+\e=2\e.$$
\end{proof}

\begin{lemma}\label{l:complete} If the metric space $X$ is complete, then so is the metric space $\mathsf Z^1\!X$.
\end{lemma}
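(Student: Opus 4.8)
The plan is to invoke the standard criterion that a metric space is complete as soon as every Cauchy sequence has a convergent subsequence, which lets me work with a conveniently sparse sequence. So I would begin with an arbitrary Cauchy sequence in $\mathsf Z^1\!X$ and, after passing to a subsequence, assume it is a sequence $(A_n)_{n\ge 1}$ satisfying $d^1_{\mathsf ZX}(A_n,A_{n+1})<2^{-n}$ for all $n$. By Lemma~\ref{l:H}, $d_{\mathsf KX}(A_n,A_{n+1})\le d^1_{\mathsf ZX}(A_n,A_{n+1})<2^{-n}$, so $(A_n)$ is Cauchy in the Hausdorff metric. Since $X$ is complete, the hyperspace $\mathsf KX$ is complete, and hence $(A_n)$ converges in $\mathsf KX$ to a nonempty compact set $A$. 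Two things then remain: that $A$ has zero length (so that $A\in\mathsf ZX$ and $d^1_{\mathsf ZX}(A_n,A)$ is even defined), and that $d^1_{\mathsf ZX}(A_n,A)\to 0$.

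Both facts I would derive from one construction. For each $k$ pick a graph $\Gamma_k=(V_k,E_k)\in\mathbf\Gamma_{\!X\!}(A_k,A_{k+1})$ with $\ell(\Gamma_k)<2^{-k}$, and for a fixed $n$ form the graph $\Gamma^{(n)}$ whose vertex set is $\bigcup_{k\ge n}V_k$ and whose edges consist of all edges of the $\Gamma_k$ together with two families of short linking edges: a \emph{forward} link joining each connected component of $\Gamma_k$ to a vertex of $V_{k+1}$, and a \emph{backward} link joining each connected component of $\Gamma_{k+1}$ to a vertex of $V_k$. Each linking edge can be made as short as we wish: for a component $C$ of $\Gamma_k$, condition~(iii) supplies a point $a\in A_{k+1}\cap\overline C$, and since $a\in\overline C\cap\overline{V_{k+1}}$ one may join a vertex of $C$ near $a$ to a vertex of $V_{k+1}$ near $a$. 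As each $\Gamma_k$ has only finitely many components, the total length added at level $k$ can be kept below $2^{-k}$, so that $\ell(\Gamma^{(n)})\le\sum_{k\ge n}\bigl(\ell(\Gamma_k)+2^{-k}\bigr)<2^{-n+2}$.

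I would then verify that $\Gamma^{(n)}\in\mathbf\Gamma_{\!X\!}(A_n,A)$. The inclusion $A_n\cup A\subseteq\overline V$ is immediate: $A_n\subseteq\overline{V_n}$, while every $a\in A$ is a Hausdorff limit of points of the sets $A_k\subseteq\overline{V_k}$ and hence lies in $\overline V$. Finiteness of the set of components follows because the backward links allow one to descend from any vertex in $V_k$ (for $k>n$) to a vertex of $V_n$; thus every component of $\Gamma^{(n)}$ contains an entire component of $\Gamma_n$, so there are at most as many components as $\Gamma_n$ has. The same descent gives $\overline C\cap A_n\ne\emptyset$ for each component $C$, since $C$ contains a component of $\Gamma_n$ whose closure meets $A_n$. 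For the remaining half of condition~(iii), the forward links force $C$ to meet every level: it contains a vertex of some component $C'_k$ of $\Gamma_k$ for each $k\ge n$, and $\overline{C'_k}\cap A_{k+1}\ne\emptyset$ by condition~(iii) for $\Gamma_k$, so $\overline C$ meets $A_{k+1}$ for all $k\ge n$. Choosing $a_k\in\overline C\cap A_{k+1}$ and using $d_{\mathsf KX}(A_{k+1},A)\to 0$ together with the compactness of $A$, a subsequence of $(a_k)$ converges to a point of $A$ lying in the closed set $\overline C$; hence $\overline C\cap A\ne\emptyset$.

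Consequently $\Gamma^{(n)}$ witnesses both that $A$ has zero length, so $A\in\mathsf ZX$, and that $d^1_{\mathsf ZX}(A_n,A)\le\ell(\Gamma^{(n)})<2^{-n+2}\to 0$. Thus the subsequence converges to $A$, and since the original sequence is Cauchy, it converges to $A$ as well, which proves completeness. I expect the main obstacle to be exactly the control of the connected components of the infinite union: a naive union of the $\Gamma_k$ may have infinitely many components, none of which need to reach both $A_n$ and the limit set $A$. The device of inserting \emph{both} forward and backward linking edges---backward to anchor every component to $\Gamma_n$ and thereby keep their number finite, forward to push every component up through arbitrarily high levels and hence, via compactness of $A$, into $A$ itself---is what resolves this, and establishing these two properties at once is the technical heart of the argument.
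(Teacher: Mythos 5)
Your proof is correct, and it differs from the paper's in a meaningful way. The paper first reduces to Cauchy sequences of \emph{finite} sets: since $\mathsf FX$ is dense in $\mathsf Z^1\!X$ (Lemmas~\ref{l:BBKZ} and \ref{l:dense}), it suffices to treat a Cauchy sequence in $\mathsf F^1\!X$, and then Lemma~\ref{l:BBKZ} supplies \emph{finite} graphs $\Gamma_k\in\mathbf\Gamma^{\mathsf f}_{\!X\!}(A_{n_k},A_{n_{k+1}})$ between consecutive terms. For finite graphs, condition (i) gives $A_{n_k}\cup A_{n_{k+1}}\subseteq\overline{V_k}=V_k$, so consecutive graphs literally share the vertices of $A_{n_{k+1}}$; the plain union $\bigcup_k\Gamma_k$ (and its tails) therefore has finitely many components with no extra edges, every component being chained down to the finite set $A_{n_0}$. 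You instead work directly with an arbitrary rapid Cauchy sequence in $\mathsf ZX$ and with general graphs from $\mathbf\Gamma_{\!X\!}(A_k,A_{k+1})$, whose vertex sets need not meet at all; this is exactly why the naive union can fail, and your forward/backward linking edges of arbitrarily small total length are the correct repair --- forward links to push each component up through all levels and hence (via Hausdorff convergence and compactness of $A$) into $A$ itself, backward links to anchor every component to $\Gamma_n$ and keep their number finite. The trade-off: your argument is self-contained, using only Lemma~\ref{l:H} and completeness of $\mathsf KX$ and bypassing the density machinery of Lemmas~\ref{l:BBKZ} and \ref{l:dense}, at the price of the gluing construction and the subsequence-extraction argument producing a point of $A\cap\overline C$; the paper's reduction buys automatic chaining but makes the lemma depend on the two earlier lemmas. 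Both proofs share the same skeleton --- Hausdorff limit plus tail unions of chained graph witnesses controlling both the zero-length property of $A$ and the convergence $d^1_{\mathsf ZX}(A_n,A)\to0$ --- so the difference is in execution rather than overall strategy, but your execution is a genuine and correct variant, not a paraphrase.
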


\begin{proof} We need to prove that each Cauchy sequence in the space $\mathsf Z^1\!X$ is convergent. Since the space $\mathsf F^1\!X$ is dense in $\mathsf Z^1\!X$ (see Lemmas~\ref{l:BBKZ}, \ref{l:dense}), it suffices to prove that each Cauchy sequence in $\mathsf F^1\!X$ converges to some set $A\in \mathsf ZX$. So, fix a Cauchy sequence $\{A_n\}_{n\in\w}\subseteq\mathsf F^1\!X$. Since $d_{\mathsf FX}=d^\infty_{\mathsf FX}\le d^1_{\mathsf FX}$, the sequence $(A_n)_{n\in\w}$ remains Cauchy in the Hausdorff metric $d_{\mathsf FX}$. By the completeness of the hyperspace $\mathsf KX$, the sequence $(A_n)_{\in\w}$ converges (in the Hausdorff metric $d_{\mathsf KX}$) to some nonempty compact set $A\in \mathsf KX$. It remains to show that $A\in \mathsf ZX$ and the sequence $(A_n)_{n\in\w}$ converges to $A$ in the metric space $\mathsf Z^1\!X$.

Given any $\e>0$, use the Cauchy property of the sequence $(A_n)_{n\in\w}$ and find an increasing number sequence $(n_k)_{k\in\w}$ such that $$d^1_{\mathsf FX}(A_{n_k},A_i)<\frac\e{2^{k+1}}$$for any $k\in\w$ and $i\ge n_k$. By Lemma~\ref{l:BBKZ}, for every $k\in\w$ there exists a graph $\Gamma_k\in\mathbf \Gamma^{\mathsf f}_{\!X\!}(A_{n_k},A_{n_{k+1}})$ such that $\ell(\Gamma_k)<\frac\e{2^{k+1}}$.  Now consider the graph $\Gamma=(V,E)$ with $V=\bigcup_{k\in\w}V_k$ and $E=\bigcup_{k\in\w}E_k$ and observe that each connected component of the graph $\Gamma$ meets the finite set $A_{n_0}$, which implies that $\Gamma$ has finitely many connected components. Taking into account that $A$ is the limit of the sequence $(A_{n_k})_{k\in\w}$ in the Hausdorff metric, we conclude that $A\subseteq\overline{\bigcup_{k\in\w}A_{n_k}}\subseteq\overline V$ and the closure of each connected component of $\Gamma$ meets the set $A$. Then $\Gamma\in\mathbf \Gamma_{\!X\!}(A)$ and $$\ell(A)\le\ell(\Gamma)\le\sum_{k\in\w}\ell(\Gamma_k)<\sum_{k\in\w}\frac\e{2^{k+1}}=\e.$$
This shows that $\ell(A)=0$ and $A\in \mathsf ZX$. 

It remains to show that the sequence $(A_n)_{n\in\w}$ converges to $A$ in the metric space $\mathsf Z^1\!X$. Since this sequence is Cauchy, it suffices to show that the subsequence $(A_{n_k})_{k\in\w}$ converges to $A$. For every $k\in\w$,  consider the graph $\widetilde\Gamma_k=(\widetilde V_k,\widetilde E_k)$ with the set of vertices $\widetilde V_k=\bigcup_{i=k}^\infty V_k$ and the set of edges $\widetilde E_k=\bigcup_{i=k}^\infty E_k$. It can be shown that $\widetilde\Gamma_k\in\mathbf \Gamma_{\!X\!}(A,A_{n_k})$ and hence
$$d^1_{\mathsf ZX}(A,A_{n_k})\le\ell(\widetilde\Gamma_k)\le\sum_{i=k}^\infty\ell(\Gamma_i)<\sum_{i=k}^\infty\frac\e{2^{i+1}}=\frac\e{2^k}\underset{k\to\infty}{\;\longrightarrow\;} 0,$$which means that the sequence $(A_{n_k})_{k\in\w}$ converges to $A$ in the metric space $\mathsf Z^1\!X$.
\end{proof}

\begin{lemma}\label{l:equal} If $Y$ is a dense subspace of $X$, then $d^1_{\mathsf ZX}(A,B)=d^1_{\mathsf ZY}(A,B)$ for every $A,B\in \mathsf ZY$.
\end{lemma}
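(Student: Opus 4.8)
The plan is to prove the two inequalities $d^1_{\mathsf ZX}(A,B)\le d^1_{\mathsf ZY}(A,B)$ and $d^1_{\mathsf ZY}(A,B)\le d^1_{\mathsf ZX}(A,B)$ separately, the first being routine and the second requiring a reduction to finite sets.

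For the easy inequality I would first record that $\mathsf ZY\subseteq\mathsf ZX$ and that every graph witnessing membership in the $Y$-family also witnesses membership in the $X$-family. Indeed, if $\Gamma=(V,E)$ is a graph with $V\subseteq Y$, then the closure of $V$ taken in $Y$ is contained in the closure taken in $X$; since $A\cup B\subseteq Y$, the conditions (i) and (iii) for $\Gamma$ computed in $Y$ imply the same conditions computed in $X$, and the total length $\ell(\Gamma)$ is the same whether edges are measured by $d_Y$ or $d_X$. Hence $\mathbf\Gamma_{\!Y\!}(A,B)\subseteq\mathbf\Gamma_{\!X\!}(A,B)$ (as families of graphs with identical lengths), and taking infima gives $d^1_{\mathsf ZX}(A,B)\le d^1_{\mathsf ZY}(A,B)$.

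The hard part is the reverse inequality, because a near-optimal graph in $\mathbf\Gamma_{\!X\!}(A,B)$ may have infinitely many vertices and edges, so one cannot simply perturb every vertex into $Y$ and control the total error. To avoid this I would reduce to finite sets. Fix $\e>0$. Applying Lemma~\ref{l:dense} with $Y$ regarded as a dense subspace of itself, choose finite sets $A',B'\in\mathsf FY$ with $d^1_{\mathsf ZY}(A,A')<\e$ and $d^1_{\mathsf ZY}(B,B')<\e$; by the easy inequality already proved, the same estimates hold with $d^1_{\mathsf ZX}$ in place of $d^1_{\mathsf ZY}$. The key subclaim is then the equality $d^1_{\mathsf FY}(A',B')=d^1_{\mathsf FX}(A',B')$ for finite $A',B'\subseteq Y$. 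By Lemma~\ref{l:BBKZ} both sides are infima of $\ell(\Gamma)$ over the finite graphs in $\mathbf\Gamma^{\mathsf f}_{\!Y\!}(A',B')$ and $\mathbf\Gamma^{\mathsf f}_{\!X\!}(A',B')$, respectively. The inequality $d^1_{\mathsf FX}(A',B')\le d^1_{\mathsf FY}(A',B')$ follows from the inclusion of finite-graph families as above. For the reverse, given a finite graph $\Gamma=(V,E)\in\mathbf\Gamma^{\mathsf f}_{\!X\!}(A',B')$ (so $A'\cup B'\subseteq V$, since finite vertex sets are closed), I would use the density of $Y$ to move each vertex of $V\setminus(A'\cup B')$ to a nearby point of $Y$ while fixing the vertices in $A'\cup B'$, choosing the displacement bound $\delta$ small enough to keep this map injective on the finite set $V$. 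The resulting graph $\Gamma'$ lies in $\mathbf\Gamma^{\mathsf f}_{\!Y\!}(A',B')$, is graph-isomorphic to $\Gamma$ (so its connected components still meet $A'$ and $B'$ at the unmoved points), and satisfies $\ell(\Gamma')\le\ell(\Gamma)+2\delta|E|$; letting $\delta\to0$ yields $d^1_{\mathsf FY}(A',B')\le d^1_{\mathsf FX}(A',B')$.

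Finally I would assemble the pieces using the triangle inequality (Lemma~\ref{l:metric}) and Lemma~\ref{l:BBKZ}:
\begin{align*}
d^1_{\mathsf ZY}(A,B)
&\le d^1_{\mathsf ZY}(A,A')+d^1_{\mathsf ZY}(A',B')+d^1_{\mathsf ZY}(B',B)\\
&< 2\e+d^1_{\mathsf FY}(A',B')=2\e+d^1_{\mathsf FX}(A',B')=2\e+d^1_{\mathsf ZX}(A',B'),
\end{align*}
and then, again by the triangle inequality together with the $d^1_{\mathsf ZX}$-estimates noted above,
$$d^1_{\mathsf ZX}(A',B')\le d^1_{\mathsf ZX}(A',A)+d^1_{\mathsf ZX}(A,B)+d^1_{\mathsf ZX}(B,B')<2\e+d^1_{\mathsf ZX}(A,B).$$
Combining the two displays gives $d^1_{\mathsf ZY}(A,B)<d^1_{\mathsf ZX}(A,B)+4\e$; since $\e>0$ is arbitrary, $d^1_{\mathsf ZY}(A,B)\le d^1_{\mathsf ZX}(A,B)$, which together with the easy inequality yields the desired equality. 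The main obstacle, as flagged, is the possible infinitude of near-optimal graphs in $X$; the whole purpose of passing through $\mathsf FY$ via Lemma~\ref{l:dense} is to replace an uncontrolled infinite perturbation by a harmless finite one.
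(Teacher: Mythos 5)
Your proof is correct and follows essentially the same route as the paper's: the easy inequality via the inclusion $\mathbf\Gamma_{\!Y\!}(A,B)\subseteq\mathbf\Gamma_{\!X\!}(A,B)$, and the reverse inequality by reducing to finite sets $A',B'\in\mathsf FY$ via Lemma~\ref{l:dense}, passing to finite graphs via Lemma~\ref{l:BBKZ}, perturbing their vertices into $Y$ by density, and assembling with the triangle inequality. The only difference is presentational: the paper runs the same estimates as a proof by contradiction (with $7\e$-bookkeeping), whereas you argue directly and isolate the finite-set equality $d^1_{\mathsf FY}(A',B')=d^1_{\mathsf FX}(A',B')$ as a clean subclaim.
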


\begin{proof} The inequality $d^1_{\mathsf ZX}(A,B)\le d^1_{\mathsf ZY}(A,B)$ is trivial and follows from the inclusion\break $\mathbf \Gamma_{\!Y\!}(A,B)\subseteq \mathbf \Gamma_{\!X\!}(A,B)$.

Assuming that $d^1_{\mathsf ZX}(A,B)<d^1_{\mathsf ZY}(A,B)$, find $\e>0$ such that $d^1_{\mathsf ZX}(A,B)+7\e<d^1_{\mathsf ZY}(A,B)$. Using Lemma~\ref{l:dense}, choose finite sets $A',B'\in \mathsf FY$ such that $d^1_{\mathsf ZY}(A,A')<\e$ and $d^1_{\mathsf ZY}(B,B')<\e$. Then also $d^1_{\mathsf ZX}(A,A')\le d^1_{\mathsf ZY}(A,A')<\e$ and $d^1_{\mathsf ZX}(B,B')\le d^1_{\mathsf ZY}(B,B')<\e$. Applying the triangle inequality, we obtain 
\begin{multline*}
d^1_{\mathsf ZX}(A',B')<d^1_{\mathsf ZX}(A',A)+d^1_{\mathsf ZX}(A,B)+d^1_{\mathsf ZX}(B,B')\le 2\e+d^1_{\mathsf ZX}(A,B)<\\
2\e+d^1_{\mathsf ZY}(A,B)-7\e\le 
d^1_{\mathsf ZY}(A,A')+d^1_{\mathsf ZY}(A',B')+d^1_{\mathsf ZY}(B',B)-5\e<\\
\e+d^1_{\mathsf ZY}(A',B')+\e-5\e=d^1_{\mathsf ZY}(A',B')-3\e.
\end{multline*}
By Lemma~\ref{l:BBKZ}, there exists a finite graph $\Gamma=(V,E)\in\mathbf \Gamma^{\mathsf f}_{\!X\!}(A',B')$ such that $$\ell(\Gamma)<d^1_{\mathsf ZX}(A',B')+\e.$$ Since $Y$ is dense in $X$, we can find a function $f:V\to Y$ such that $f(x)=x$ if $x\in Y$ and $d_X(f(x),x)<\e/|E|$ if $x\in V\setminus Y$. Consider the graph $\Gamma'=(V',E')$ with the set of vertices $V'=f(V)$ and the set of edges $E'=\{\{f(x),f(y)\}:\{x,y\}\in E\}$. 
Observe that the graph $\Gamma'$ belongs to the family $\mathbf \Gamma_{\!Y\!}^{\mathsf f}(A',B')$ and hence
\begin{multline*}
d^1_{\mathsf ZY}(A',B')\le\ell(\Gamma')=\sum_{\{x',y'\}\in E'}d_X(x',y')\le\sum_{\{x,y\}\in E}d_X(f(x),f(y))\le \\
\sum_{\{x,y\}\in E}(d_X(f(x),x)+d_X(x,y)+d_X(y,f(y))<  
\sum_{\{x,y\}\in E}(\tfrac\e{|E|}+d_X(x,y)+\tfrac\e{|E|})<\\
2\e+\sum_{\{x,y\}\in E}d_X(x,y)=2\e+\ell(\Gamma)<2\e+d^1_{\mathsf ZX}(A',B')+\e<d^1_{\mathsf ZY}(A',B'),
\end{multline*} 
which is a desired contradiction showing that $d^1_{\mathsf ZX}(A,B)=d^1_{\mathsf ZY}(A,B)$.
\end{proof}

\begin{lemma} If $\bar X$ is a completion of $X$, then the complete metric space  $\mathsf Z^1\!\bar X$ is a completion of the metric space $\mathsf F^1\!X$.
\end{lemma}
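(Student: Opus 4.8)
The plan is to assemble the conclusion directly from the lemmas already proved, letting $X$ play the role of a dense subspace of its completion $\bar X$. Recall that a completion of $\mathsf F^1\!X$ must be a complete metric space in which $\mathsf F^1\!X$ sits as a dense subspace, so I would verify exactly these two properties for $\mathsf Z^1\!\bar X$, after first checking that $\mathsf F^1\!X$ embeds isometrically.

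First I would observe that, since every finite subset of $X$ is a compact set of zero length, we have the inclusions $\mathsf FX\subseteq\mathsf ZX\subseteq\mathsf Z\bar X$, so the elements of $\mathsf F^1\!X$ are genuinely points of $\mathsf Z^1\!\bar X$. To see that this inclusion is isometric, take any two finite sets $A,B\in\mathsf FX$. Applying Lemma~\ref{l:equal} to the dense subspace $X\subseteq\bar X$ yields $d^1_{\mathsf Z\bar X}(A,B)=d^1_{\mathsf ZX}(A,B)$, and Lemma~\ref{l:BBKZ} yields $d^1_{\mathsf ZX}(A,B)=d^1_{\mathsf FX}(A,B)$. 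Hence the restriction of the metric $d^1_{\mathsf Z\bar X}$ to $\mathsf FX$ coincides with $d^1_{\mathsf FX}$, so the identity map $\mathsf F^1\!X\to\mathsf Z^1\!\bar X$ is an isometric embedding.

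It remains to verify density and completeness. Density follows from Lemma~\ref{l:dense} applied to the dense subset $Y=X$ of $\bar X$: the set $\mathsf FX$ is dense in the metric space $\mathsf Z^1\!\bar X$. Completeness follows from Lemma~\ref{l:complete}, since $\bar X$ is complete by hypothesis. Thus $\mathsf Z^1\!\bar X$ is a complete metric space containing $\mathsf F^1\!X$ as a dense subspace, which is precisely the assertion that $\mathsf Z^1\!\bar X$ is a completion of $\mathsf F^1\!X$.

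I do not expect a genuine obstacle here, as the whole argument is a bookkeeping combination of the earlier lemmas. The only point demanding care is to keep straight which space is ambient and which is the dense subspace when invoking Lemmas~\ref{l:dense} and~\ref{l:equal}, so that the identifications $\mathsf FX\subseteq\mathsf ZX\subseteq\mathsf Z\bar X$ and the resulting equalities of the three metrics $d^1_{\mathsf FX}$, $d^1_{\mathsf ZX}$ and $d^1_{\mathsf Z\bar X}$ are applied consistently on the finite sets.
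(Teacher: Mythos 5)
Your proof is correct and follows essentially the same route as the paper's: completeness via Lemma~\ref{l:complete}, the isometric identification of $\mathsf F^1\!X$ inside $\mathsf Z^1\!\bar X$ via Lemmas~\ref{l:BBKZ} and~\ref{l:equal} applied to the dense subspace $X\subseteq\bar X$, and density via Lemma~\ref{l:dense}. Your write-up is in fact slightly more careful than the paper's in making the inclusions $\mathsf FX\subseteq\mathsf ZX\subseteq\mathsf Z\bar X$ and the role of each space explicit.
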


\begin{proof} By Lemma~\ref{l:complete}, the metric space $\mathsf Z^1\!\bar X$ is complete. By Lemmas~\ref{l:BBKZ} and \ref{l:equal}, for any $A,B\in \mathsf FX$ we have 
$$d^1_{\mathsf FX}(A,B)=d^1_{\mathsf ZX}(A,B)=d^1_{\mathsf Z\bar X}(A,B),$$
so the metric space $\mathsf F^1\!X$ is a subspace of the complete metric space $\mathsf Z^1\!\bar X$. By Lemma~\ref{l:dense}, the space $\mathsf FX$ is dense in $\mathsf Z^1\!\bar X$.  This means that $\mathsf Z^1\!\bar X$ is a completion on $\mathsf F^1\!X$.
\end{proof}
 
Now we discuss the interplay between zero length and 1-dimensional Hausdorff measure. A subset $A$ of a metric space $X$ is defined to have {\em $1$-dimensional Hausdorff measure zero} if for any $\e>0$ there exists a countable set $C\subseteq X$ and a function $\epsilon:C\to(0,1]$ such that $\sum_{c\in C}\epsilon(c)<\e$ and $A\subseteq \bigcup_{c\in C}B(c,\epsilon(c))$. Here and further on by $$B(x,\delta)=\{y\in X:d_X(x,y)<\delta\}\mbox{ \ and \ }B[x,\delta]=\{y\in X:d_X(x,y)\le\delta\}$$ we denote respectively the open and closed balls of radius $\delta$ around a point $x$ in the metric space $(X,d_X)$.

\begin{proposition}\label{p:zero} If a subset $A$ of a metric space $(X,d_X)$ has zero length, then it is totally bounded, its closure has zero length and also $\bar A$ has 1-dimensional Hausdorff measure zero.
\end{proposition}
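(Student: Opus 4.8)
The plan is to prove the three assertions in the order listed, treating total boundedness and the zero length of $\bar A$ as short warm-ups and reserving the Hausdorff-measure estimate for the real work. Throughout I will exploit, as in Lemma~\ref{l:H}, that inside a single connected component any two vertices $x,y$ are joined by a path with pairwise distinct edges, so $d_X(x,y)$ is bounded by the sum of the lengths of these edges, hence by $\ell(\Gamma)$.

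First, total boundedness. Given $\delta>0$, I use the zero length of $A$ to choose a graph $\Gamma=(V,E)\in\mathbf\Gamma_{\!X\!}(A)$ with $\ell(\Gamma)<\delta$. By the path observation above, every connected component of $\Gamma$ has diameter $<\delta$, and hence so does its closure. Since $\Gamma$ has only finitely many components $C_1,\dots,C_m$ and $A\subseteq\overline V=\bigcup_j\overline{C_j}$, picking one point $p_j\in C_j$ gives a cover $A\subseteq\bigcup_j B(p_j,\delta)$ by finitely many $\delta$-balls. As $\delta$ was arbitrary, $A$ is totally bounded. Second, the zero length of $\bar A$ is immediate: the family $\mathbf\Gamma_{\!X\!}(A)$ constrains $A$ only through $A\subseteq\overline V$, and as $\overline V$ is closed this forces $\bar A\subseteq\overline V$; thus every graph witnessing $\ell(A)<\e$ simultaneously witnesses $\ell(\bar A)<\e$, so $\ell(\bar A)=0$.

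Third, and this is the heart of the matter, $\bar A$ has $1$-dimensional Hausdorff measure zero. The key estimate I aim to prove is: \emph{if $C$ is a connected component of a graph in $X$ whose edges have total length $L$, then for every $r\in(0,1]$ its closure $\overline C$ can be covered by at most $L/r+1$ balls of radius $2r+\eta$, for any $\eta>0$.} Granting this, I fix $\e>0$, choose $\Gamma=(V,E)\in\mathbf\Gamma_{\!X\!}(A)$ with finitely many components $C_1,\dots,C_m$ and with $\ell(\Gamma)$ as small as I like, and cover each $\overline{C_j}$ by these balls of a common small radius. The total of all the radii is at most $\sum_j(L_j/r+1)(2r+\eta)\le 2\,\ell(\Gamma)+2mr+(\text{small})$, which I can force below $\e$ by taking $\ell(\Gamma)$ small, then $r$ small, then $\eta$ small (recall $m$ is finite). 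Since these balls cover $\bigcup_j\overline{C_j}=\overline V\supseteq\bar A$, this is the required economical cover.

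The substance is the emphasized estimate, and the main obstacle is that a component $C$ may be an infinite graph, so I cannot induct on the number of vertices. I would instead argue through a packing bound. Call $P\subseteq C$ a \emph{$2r$-separated} set if $d_X(p,q)\ge 2r$ for distinct $p,q\in P$; I claim every such $P$ is finite with $|P|\le L/r+1$. Indeed, a finite connected subgraph of $C$ containing $P$ (the union of finitely many paths joining the points of $P$ pairwise) has total length $\le L$; passing to a spanning tree and doubling each of its edges produces a closed Eulerian walk of total length $\le 2L$ visiting every point of $P$. Ordering $P$ by first visit, the portions of the walk between consecutive first visits are edge-disjoint and each has length $\ge 2r$ by the triangle inequality, so $2r(|P|-1)\le 2L$. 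Consequently a maximum-size $2r$-separated set exists, is automatically maximal, and is therefore a $2r$-net: by maximality every point of $C$ lies within distance $<2r$ of $P$, and a limiting argument (finiteness of $P$) shows every point of $\overline C$ lies within distance $\le 2r$ of $P$. Enlarging the radius to $2r+\eta$ passes to open balls and completes the estimate.
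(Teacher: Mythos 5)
Your proof is correct, but for the main assertion (the Hausdorff measure estimate) it takes a genuinely different and heavier route than the paper. The paper's argument is much shorter: fix one vertex $x$ in each connected component; since every vertex $y$ of the component $\Gamma(x)$ is joined to $x$ by a path with pairwise distinct edges, $d_X(x,y)\le\ell(\Gamma(x))$, so the whole component, and hence its closure, lies in a \emph{single} ball centered at $x$ of radius $\epsilon(x)=\sup_{y\in\Gamma(x)}d_X(x,y)\le\ell(\Gamma(x))$; these radii sum to at most $\ell(\Gamma)<\e$, and a small $\delta$-enlargement converts the closed balls into open ones. One ball per component suffices, no nets or packings are needed, and total boundedness falls out of the same cover (the paper does not even argue it separately, exactly as it falls out of yours). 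You instead cover each component by many balls of small radius $2r+\eta$, proving the packing bound $|P|\le L/r+1$ for $2r$-separated subsets via a spanning tree whose edges are doubled to produce a closed Eulerian walk --- which is essentially the same doubling trick the paper itself uses later, in Lemma~\ref{l:tree}, to prove Proposition~\ref{p:dim}. Your route costs extra machinery and a harmless factor of $2$, but it does buy something: your covers have arbitrarily small mesh, so they verify that the $1$-dimensional Hausdorff measure vanishes under the standard definition (covers by sets of diameter at most $\delta$, for every $\delta>0$), whereas the paper's one-ball-per-component cover directly verifies only the content-style definition that the paper adopts (the two are equivalent for null sets, but your argument makes this invisible). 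Two small points to tighten: the packing claim should first be stated for \emph{finite} $2r$-separated sets $P$ (your construction of a finite connected subgraph containing $P$ presupposes finiteness), and only then can you conclude that every $2r$-separated subset is finite with the same bound; and the ``edge-disjoint'' portions of the Eulerian walk are more accurately time-disjoint segments of the walk, which is the actual reason their lengths sum to at most $2L$.
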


\begin{proof} If $A$ has zero length, then for every $\e>0$ there exists a graph $\Gamma=(V,E)$ in $X$ that has finitely many connected components such that $\ell(\Gamma)<\e$ and $A\subseteq\overline V$. Then also $\bar A\subseteq\overline V$, which means that $\bar A$ has zero length. To see that $\bar A$ has 1-dimensional Hausdorff measure zero, choose a finite set $D\subseteq V$ that meets each connected component of $V$ in a single point. Then $\{\Gamma(x)\}_{x\in D}$ is a finite disjoint cover of $V$. For every $x\in D$ let $\epsilon(x):=\sup_{y\in \Gamma(x)}d_X(x,y)$ and observe that $V\subseteq\bigcup_{x\in D}B(x,\epsilon(x))$. The connectedness of $\Gamma(x)$ implies that $\epsilon(x)\le\ell(\Gamma(x))$ and $\sum_{x\in D}\epsilon(x)\le\ell(\Gamma)<\e$. Choose any $\delta>0$ such that $|D|\cdot\delta+\sum_{x\in D}\epsilon(x)<\e$ and observe that
$$\bar A\subseteq\overline V\subseteq \bigcup_{x\in D}B[x,\epsilon(x)]\subseteq  \bigcup_{x\in D}B(x,\epsilon(x)+\delta).$$
Since $\sum_{x\in D}(\epsilon(x)+\delta)=|D|\cdot\delta+\sum_{x\in D}\epsilon(x)<\e$, and $\e$ is arbitrary, the set $\bar A$ has  1-dimensional Hausdorff measure zero.
\end{proof}

For subsets of the real line we have the following characterization.

\begin{proposition}\label{p:zero2} For a subset $A$ of the real line the following conditions are equivalent:
\begin{enumerate}
\item $A$ has zero length;
\item the closure $\bar A$ is compact and has zero length;
\item the closure $\bar A$ is compact and has $1$-dimensional Hausdorff measure zero;
\item the closure $\bar A$ is compact and has Lebesgue measure zero.
\end{enumerate}
\end{proposition}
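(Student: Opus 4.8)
The plan is to prove the four conditions equivalent through the cycle $(1)\Ra(2)\Ra(3)\Ra(4)\Ra(1)$, letting Proposition~\ref{p:zero} carry the three easy implications and reserving a direct geometric construction for the crucial return $(4)\Ra(1)$. For $(1)\Ra(2)$ I would invoke Proposition~\ref{p:zero}: a set $A$ of zero length is totally bounded, so its closure $\bar A$ is totally bounded and, being closed in the complete space $\IR$, is compact; moreover $\bar A$ again has zero length. The step $(2)\Ra(3)$ is immediate from Proposition~\ref{p:zero} (zero length implies $1$-dimensional Hausdorff measure zero, and compactness is kept), and $(3)\Ra(4)$ needs only the remark that on the real line a ball $B(c,\e(c))$ is an interval of length $2\e(c)$, so a cover witnessing $1$-dimensional Hausdorff measure zero is a cover by intervals of total length $<2\e$, whence $\bar A$ is Lebesgue-null. (For the reverse passage, if one prefers, one subdivides the covering intervals so that all radii lie in $(0,1]$ while their sum stays below $\e$; this is routine.)

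The heart of the matter is $(4)\Ra(1)$, for which it suffices to show that a compact $K\subseteq\IR$ of Lebesgue measure zero has zero length; then $A\subseteq\bar A=K$ has zero length because every graph witnessing zero length of $K$ witnesses it for $A$ as well. Fix $\e>0$. Using compactness and the vanishing of Lebesgue measure, I would build a nested sequence of finite families $\mathcal U_0,\mathcal U_1,\dots$ of pairwise disjoint closed intervals, each $\mathcal U_n$ covering $K$, each interval of $\mathcal U_{n+1}$ contained in a unique ``parent'' interval of $\mathcal U_n$, with every interval of $\mathcal U_n$ of length $\le 2^{-n}$ and with $\lambda_n:=\sum_{I\in\mathcal U_n}|I|<\e\,2^{-n-2}$. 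Such families exist because within each $I\in\mathcal U_n$ the compact set $K\cap I$ is again Lebesgue-null, hence coverable by arbitrarily short subintervals of $I$, which can be subdivided and disjointified at will.

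The graph $\Gamma=(V,E)$ then has vertex set $V=\bigcup_n\{\text{endpoints of intervals of }\mathcal U_n\}$ and two kinds of edges: a type~A edge joining the two endpoints of each interval $I\in\mathcal U_n$, and, for each $I\in\mathcal U_n$ with left-to-right children $I_1,\dots,I_k\in\mathcal U_{n+1}$, type~B edges running through the gaps, namely $\{\mathrm{left}(I),\mathrm{left}(I_1)\}$, the $\{\mathrm{right}(I_i),\mathrm{left}(I_{i+1})\}$ and $\{\mathrm{right}(I_k),\mathrm{right}(I)\}$. Density is clear: each $x\in K$ lies in intervals $I_n(x)\in\mathcal U_n$ of length $\le 2^{-n}$, so an endpoint of $I_n(x)$ lies within $2^{-n}$ of $x$ and hence $x\in\overline V$. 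For the total length, the type~A edges contribute $\sum_n\lambda_n<\e/2$, while the type~B edges inside a fixed $I$ sum to the total gap length $|I|-\sum_i|I_i|$, so over all $n$ they telescope to $\sum_n(\lambda_n-\lambda_{n+1})=\lambda_0<\e/4$; thus $\ell(\Gamma)<\e$.

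The step I expect to be the real obstacle is checking that $\Gamma$ has only \emph{finitely many} connected components while keeping $\ell(\Gamma)<\e$. The ``gap path'' alone does \emph{not} join the two endpoints of an interval by a \emph{finite} chain of edges, since crossing each child would require descending through infinitely many scales; this is exactly why the type~A edges (directly joining the two endpoints of every interval) are indispensable. With both edge types present, one verifies by induction up the finite inclusion-tree over a fixed interval of $\mathcal U_0$ that every vertex is joined to the left endpoint of that $\mathcal U_0$-interval by a finite path, so each $\mathcal U_0$-interval carries a single connected component and $\Gamma$ has exactly $|\mathcal U_0|<\infty$ components. Combined with $K\subseteq\overline V$ and $\ell(\Gamma)<\e$, this yields $\Gamma\in\mathbf\Gamma_{\!X\!}(K)$ of length $<\e$, so $K$ has zero length and the cycle closes.
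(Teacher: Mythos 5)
Your proof is correct and follows essentially the same route as the paper: the implications $(1)\Ra(2)\Ra(3)$ are delegated to Proposition~\ref{p:zero}, $(3)\Ra(4)$ is the same easy observation about Lebesgue measure, and your $(4)\Ra(1)$ is the same multi-scale construction as the paper's --- finite interval covers of $\bar A$ with geometrically decreasing total measure, a graph built from interval edges plus connector/gap edges, connectivity verified by induction on scales, and total length bounded by a constant times $\sum_k\lambda_k$. The only differences are bookkeeping (you use disjoint closed intervals with an explicit parent--child tree, the paper uses nested open neighborhoods $U_{k+1}\subset U_k$ and components), and your explicit discussion of why the interval edges are needed for finite connectivity is a point the paper leaves implicit.
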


\begin{proof} The implications $(1)\Ra(2)\Ra(3)$ were proved in Proposition~\ref{p:zero}. The implication $(3)\Ra(4)$ follows from the definition of the Lebesgue measure (as the 1-dimensional Hausdorff measure) on the real line.

To prove that $(4)\Ra(1)$, assume that the closure $\bar A$ is compact and has Lebesgue measure zero. Take any $\e>0$. Using the compactness of the set $\bar A$ and the regularity of the Lebesgue measure, 
construct inductively a decreasing sequence $(U_k)_{k\in\w}$ of bounded open  neighborhoods of $\bar A$ such that for every $k\in\w$ the following conditions are satisfied:
\begin{itemize}
\item $\overline U_{\!k+1}\subset U_k$;
\item the set $U_k$ has Lebesgue measure $\lambda(U_k)<\e/2^{k}$;
\item $U_k=\bigcup_{i=1}^{n_k}(a_{i,k},b_{i,k})$ for some $n_k\in\IN$ and real numbers $a_{1,k}<b_{1,k}\le\!\cdots\!\le a_{n_k,k}<b_{n_k,k}$ such that $A\cap (a_{i,k},b_{i,k})\ne\emptyset$ for every $i\in\{1,\dots,n_k\}$.
\end{itemize}
For every $k\in\w$ let $$a'_{i,k}:=\min\{a_{j,k+1}:j\in\{1,\dots,n_{k+1}\},\;a_{i,k}<a_{j,k+1}\}$$ and observe that $a'_{i,k}\le\min \big(\bar A\cap(a_{i,k},b_{i,k})\big)$ and hence $|a_{i,k}-a'_{i,k}|\le|a_{i,k}-b_{i,k}|$. 
For every $k\in\IN$, let $$\Omega_k=\big\{i\in\{1,\dots,n_k-1\}:\exists j\in\{1,\dots,n_{k-1}\}\;\;\;(b_{i,k},a_{i+1,k})\subseteq (a_{j,k-1},b_{j,k-1})\big\}.$$

Consider the graph $\Gamma=(V,E)$ with the set of vertices $$V=\bigcup_{k\in\w}\{a_{i,k},b_{i,k}:1\le i\le n_k\}$$ and the set of edges $$E=\big\{\{a_{i,k},b_{i,k}\},\{a_{i,k},a'_{i,k}\}:k\in\w,\;i\in\{1,\dots,n_k\}\big\}\cup\big\{\{b_{i,k},a_{i+1,k}\}:k\in\IN,\;i\in\Omega_k\big\}.$$
It is easy to see that $A\subseteq\bar A\subseteq \overline V$ and each connected component of the graph $\Gamma$ intersects the set $\{a_{i,0}:1\le i\le n_0\}$. Therefore, $\Gamma$ has finitely many connected components. Also
\begin{multline*}
\ell(\Gamma)\le \sum_{k=0}^\infty \sum_{i=1}^{n_k}(|b_{i,k}-a_{i,k}|+|a'_{i,k}-a_{i,k}|)+\sum_{k=1}^\infty \sum_{i\in\Omega_k}|a_{i+1,k}-b_{i,k}|<\\
 2\cdot\sum_{k=0}^\infty \sum_{i=1}^{n_k}|b_{i,k}-a_{i,k}|+\sum_{k=1}^\infty\sum_{j=1}^{n_{k-1}}|b_{i,k-1}-a_{i,k-1}|= 3\cdot\sum_{k=0}^\infty \sum_{i=1}^{n_k}|b_{i,k}-a_{i,k}|\le\\
3\cdot\sum_{k=0}^\infty\lambda(U_k)<3\sum_{k=0}^\infty\frac\e{2^{k}}=3\e,
 \end{multline*}
 which implies that the set $A$ has zero length.
\end{proof}

\begin{proposition}\label{p:R} For the real line $X=\IR$, the identity inclusion $\mathsf Z^1\!X\to \mathsf KX$ is a topological embedding. 
\end{proposition}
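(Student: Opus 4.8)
The plan is to show that the identity inclusion $\iota\colon\mathsf Z^1\!X\to\mathsf KX$ is a homeomorphism onto its image. Injectivity is clear, and by Lemma~\ref{l:H} we have $d_{\mathsf KX}(A,B)\le d^1_{\mathsf ZX}(A,B)$ for all $A,B\in\mathsf ZX$, so $\iota$ is $1$-Lipschitz and hence continuous. Thus the only real content is the continuity of $\iota^{-1}$: I must show that for every $A\in\mathsf ZX$ and every $\e>0$ there is $\delta>0$ such that every $B\in\mathsf ZX$ with $d_{\mathsf KX}(A,B)<\delta$ satisfies $d^1_{\mathsf ZX}(A,B)<\e$. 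Since $X=\IR$, Proposition~\ref{p:zero2} identifies $\mathsf Z\IR$ with the family of nonempty compact subsets of $\IR$ of Lebesgue measure zero, and this is the structure I would exploit.

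First I would fix $A\in\mathsf Z\IR$ and $\e>0$ and choose the neighborhood and the radius. Using the compactness of $A$, the regularity of the Lebesgue measure and $\lambda(A)=0$, pick an open neighborhood $U=\bigcup_{i=1}^m(a_i,b_i)$ of $A$ that is a finite union of pairwise disjoint open intervals, each meeting $A$, with total length $\lambda(U)$ smaller than a fixed multiple of $\e$ (say $\lambda(U)<\e/8$). As $A$ is compact and disjoint from the closed set $\IR\setminus U$, the number $\mathrm{dist}(A,\IR\setminus U)$ is positive; let $g>0$ be the smallest gap between two consecutive intervals of $U$ (with $g=+\infty$ if $m=1$), and set $\delta=\tfrac12\min\{g,\ \mathrm{dist}(A,\IR\setminus U)\}$. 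Now take $B\in\mathsf Z\IR$ with $d_{\mathsf KX}(A,B)<\delta$. Every point of $B$ lies within distance $\delta$ of $A$, and $\delta<\mathrm{dist}(A,\IR\setminus U)$ gives $B\subseteq U$. Moreover a short separation estimate shows each interval meets both sets: if $a\in A\cap(a_i,b_i)$ and a point $b\in B$ with $|a-b|<\delta$ belonged to a different interval $(a_j,b_j)$, then $|a-b|$ would exceed the gap between these intervals, which is $\ge g\ge 2\delta$, a contradiction; hence $b\in(a_i,b_i)$, and symmetrically each $(a_i,b_i)$ meets $A$ and $B$.

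Next I would build the connecting graph. With $S:=A\cup B$ a compact set of Lebesgue measure zero contained in $U$, I would run the construction from the proof of the implication $(4)\Ra(1)$ in Proposition~\ref{p:zero2}, applied to $S$ and started from the initial neighborhood $U_0:=U$. This produces a graph $\Gamma=(V,E)$ in $\IR$ with $S\subseteq\overline V$, with finitely many connected components indexed by the intervals $(a_i,b_i)$, and with $\ell(\Gamma)$ bounded by a fixed multiple of $\lambda(U)$, hence $<\e$. Since the closure of the component $C_i$ sitting over $(a_i,b_i)$ contains $S\cap\overline{(a_i,b_i)}$, and the previous paragraph shows this set meets both $A$ and $B$, condition (iii) in the definition of $\mathbf\Gamma_{\!X\!}(A,B)$ holds; together with $A\cup B\subseteq\overline V$ and the finiteness of the components this yields $\Gamma\in\mathbf\Gamma_{\!X\!}(A,B)$. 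Therefore $d^1_{\mathsf ZX}(A,B)\le\ell(\Gamma)<\e$, which is exactly the continuity of $\iota^{-1}$ at $A$, and completes the proof that $\iota$ is a topological embedding.

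I expect the main obstacle to be the construction of the graph in the third paragraph: for a Hausdorff-close pair $A,B$ one must manufacture a single graph that simultaneously has length controlled by a small-measure neighborhood of $A\cup B$, has only finitely many components, and recaptures both $A$ and $B$ in the closure of every component. The one-dimensional measure-zero structure is what makes this feasible, since it lets me squeeze $A\cup B$ into a finite union of short intervals and reuse the graph already built in Proposition~\ref{p:zero2}; the delicate points are verifying that the components of the reused graph are genuinely indexed by the top-level intervals and that each component's closure contains the full local portion $S\cap\overline{(a_i,b_i)}$, while the separation estimate is exactly what upgrades ``each interval meets $A$'' to ``each interval meets both $A$ and $B$'', as condition (iii) requires.
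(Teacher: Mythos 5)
Your proof is correct, and its skeleton matches the paper's: both reduce everything to the continuity of $\iota^{-1}$ via Lemma~\ref{l:H}, both use Proposition~\ref{p:zero2} and the regularity of Lebesgue measure to enclose $A$ in a finite union $U=\bigcup_{i=1}^n(a_i,b_i)$ of short intervals each meeting $A$, both choose $\delta$ so that $d_{\mathsf KX}(A,B)<\delta$ forces $B\subseteq U$, and both recycle the graph construction from the proof of $(4)\Ra(1)$ of Proposition~\ref{p:zero2}. Where you genuinely diverge is in how the connecting graph is manufactured. The paper runs that construction \emph{twice}, producing $\Gamma_{\!A}$ over $A$ and $\Gamma_{\!B}$ over $B$ (the latter inside $U$), and then glues them: it fixes a finite set $D$ of representatives of the components of $\Gamma_{\!B}$ and, inside each interval $(a_i,b_i)$, adds a monotone chain of edges through $\{a_i\}\cup\big(D\cap(a_i,b_i)\big)$, at extra cost at most $\sum_{i=1}^n|b_i-a_i|$; condition (iii) then follows because every component of $\Gamma_{\!A}$ contains some $a_i$ and every component of $\Gamma_{\!B}$ gets hooked to some chain. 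You instead run the construction \emph{once}, on the compact measure-zero set $S=A\cup B$ with initial neighborhood $U_0=U$, and obtain (iii) from the facts that the components of the resulting graph sit over the individual intervals of $U$ (possibly merging when two interval closures share an endpoint, which is harmless) and that each interval meets both $A$ and $B$. Your route avoids the gluing step and makes (iii) symmetric in $A$ and $B$, at the price of the structural claims you rightly flag; these do hold, since every edge of the $(4)\Ra(1)$ construction lies in the closure of a single interval of $U_0$, the pieces over one interval are linked to its left endpoint by the $\{a_{i,k},a'_{i,k}\}$ and bridging edges, and every point of $S$ is the limit of left endpoints of the nested intervals containing it. One further point in your favor: your separation estimate is exactly what the paper compresses into the unproved claim ``$\Gamma\in\mathbf\Gamma_{\!X\!}(A,B)$''; if $\delta$ merely guaranteed $B\subseteq U$ without being at most $\mathrm{dist}(A,\IR\setminus U)$, an interval of $U$ could miss $B$ entirely and (iii) would fail, so making this explicit is a real improvement (though your gap condition $\delta\le g/2$ is redundant: once $\delta\le\mathrm{dist}(A,\IR\setminus U)$, the connected set $(a-\delta,a+\delta)$ lies in $U$ and hence in the interval of $U$ containing $a$).
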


\begin{proof} Because of Lemma~\ref{l:H}, it suffices to prove that for every $A\in\mathsf ZX$ and $\e>0$ there exists $\delta>0$ such that for any $B\in\mathsf ZX$ the inequality $d_{\mathsf KX}(A,B)<\delta$ implies $d^1_{\mathsf ZX}(A,B)<\e$.

By Proposition~\ref{p:zero2},  the set $\bar A$ is compact and has Lebesgue measure zero. By the regularity of the Lebesgue measure on the real line, there exists an open neighborhood $U$ of  $\bar A$ in $\IR$ such that $U=\bigcup_{i=1}^n(a_i,b_i)$ for some sequence $a_1<b_1<a_2<b_2<\dots<a_n<b_n$ such that $\sum_{i=1}^n|b_i-a_i|<\tfrac19\e$. By the proof of Proposition~\ref{p:zero2}, there exists a graph $\Gamma_{\!A}=(V_{\!A},E_A)$ such that $\bar A\subseteq \overline V_{\!\!A}$, $\ell(\Gamma_{\!A})<3\cdot\tfrac19\e=\tfrac13\e$, and each connected component of $\Gamma_{\!A}$ intersects the set $\{a_i\}_{i=1}^n$. Find $\delta>0$ such that every set $B\in\mathsf KX$ with $d_{\mathsf KX}(A,B)<\delta$ is contained in $U$. Take any set $B\in\mathsf ZX$ with $d_{\mathsf KX}(A,B)<\delta$. Then $B\subseteq U$ and by the proof of Proposition~\ref{p:zero2}, there exists a graph $\Gamma_{\!B}=(V_{\!B},E_{\!B})$  with finitely many components such that $\overline B\subseteq\overline V_{\!B}\subset U$ and $\ell(\Gamma_{\!B})<3\cdot\frac19 \e=\frac13\e$. Let $D\subseteq V$ be a finite set intersecting each connected component of the graph $\Gamma_{\!B}$. 

For every $i\in\{1,\dots,n\}$, write the set $\{a_i\}\cup \big(D\cap (a_i,b_i)\big)$ as $\{a_{i,0},\dots,a_{i,m_i}\}$ for some points $a_{i,0}<\dots<a_{i,m_i}$. It follows that $a_{i,1}=a_i$ and $a_{i,m_i}\le b_i$, which implies $\sum_{j=1}^{m_i}|a_{i,j}-a_{i,j-1}|\le|b_i-a_i|$.
Consider the graph $\Gamma=(V,E)$ with the set of vertices $V=V_A\cup V_B$ and the set of edges  
$$E=E_A\cup E_B\cup\bigcup_{i=1}^n\big\{\{a_{i,j-1},a_{i,j}\}:j\in\{1,\dots,m_i\}\big\}.$$
It can be shown that $\Gamma\in\mathbf\Gamma_{\!X\!}(A,B)$ and hence
$$d^1_{\mathsf ZX}(A,B)\le\ell(\Gamma)\le\ell(\Gamma_A)+\ell(\Gamma_B)+\sum_{i=1}^n\sum_{j=1}^{m_i}|a_{i,j}-a_{i,j-1}|<\tfrac13\e+\tfrac13\e+\sum_{i=1}^n|b_i-a_i|<\tfrac23\e+\tfrac19\e<\e.
$$  
\end{proof}

Proposition~\ref{p:R} is specific for the real line and does not hold for higher-dimensional Euclidean spaces. To prove this fact, let us recall the definition of the upper box-counting dimension $\overline{\dim}_B(X)$ of a metric space $X$. Given any $\e>0$, denote by $N_\e(X)$ by the smallest cardinality of a cover of $X$ by subsets of diameter $\le \e$. Observe that the metric space $X$ is totally bounded iff $N_\e(X)$ is finite for every $\e>0$. If $X$ is not totally bounded, then put $\overline{\dim}_B(X)=\infty$. If $X$ is totally bounded, then let
$$\overline{\dim}_B(X):=\limsup_{\e\to+0}\frac{\ln N_\e(X)}{\ln(1/\e)}\in[0,\infty].$$
By \cite[\S3.2]{Fal}, for every $n\in\IN$, every bounded set $X\subseteq\IR^n$ with nonempty interior has $\overline{\dim}_B(X)=n$.

In the following proposition we endow the hyperspace $\mathsf FX$ with the Hausdorff metric.

\begin{proposition}\label{p:dim} Let $X$ be a metric space and $Y\subseteq X$ be a subspace of $X$ such that $\overline{\dim}_B(Y)>1$. Then for any $l\in\IN$ there exists a nonempty  finite subset $A\subseteq Y$ such that $d^1_{\mathsf FX}(A,\{x\})\ge l$ for any singleton $\{x\}\subseteq X$.
\end{proposition}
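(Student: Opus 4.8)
The plan is to reduce the statement to a purely geometric estimate on the lengths of connected graphs spanning a finite separated subset of $Y$, and then to manufacture such a separated set using the hypothesis $\overline{\dim}_B(Y)>1$. First I would reinterpret the quantity $d^1_{\mathsf FX}(A,\{x\})$. By Lemma~\ref{l:BBKZ} it equals $\inf\ell(\Gamma)$ over finite graphs $\Gamma=(V,E)\in\mathbf \Gamma^{\mathsf f}_{\!X\!}(A,\{x\})$. Since $V$ is finite and hence closed, and condition (iii) forces the closure of \emph{every} connected component to meet the singleton $\{x\}$ while distinct components are disjoint, the graph $\Gamma$ must consist of a single component containing $x$. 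Thus $\Gamma$ is a connected finite graph with $A\cup\{x\}\subseteq V$, and $d^1_{\mathsf FX}(A,\{x\})$ is the infimal length of a connected finite graph spanning $A\cup\{x\}$.

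Next I would record the trivial monotonicity $d^1_{\mathsf FX}(A,\{x\})\ge L(A)$, where $L(A)$ is the infimal length of a connected finite graph having $A$ among its vertices: any connected graph spanning $A\cup\{x\}$ already spans $A$. So it suffices to bound $L(A)$ from below, uniformly in $x$. The key estimate I would prove is: \emph{if $A$ is $\e$-separated (pairwise distances $\ge\e$) and $|A|=N$, then $L(A)\ge\tfrac12(N-1)\e$}. To see this, take any connected finite $\Gamma$ with $A\subseteq V$; passing to a spanning tree only decreases $\ell(\Gamma)$, so assume $\Gamma$ is a tree of length $L$. Doubling each edge yields an Eulerian multigraph carrying a closed walk $W$ of length $2L$ that visits every vertex. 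Listing the points of $A$ as $q_1,\dots,q_N$ in order of first appearance along $W$, the portion of $W$ between the first appearances of $q_i$ and $q_{i+1}$ is a walk from $q_i$ to $q_{i+1}$, hence of length $\ge d_X(q_i,q_{i+1})\ge\e$; these $N-1$ portions are pairwise non-overlapping sub-walks of $W$, so $(N-1)\e\le 2L$. Combined with the previous paragraph, this gives $d^1_{\mathsf FX}(A,\{x\})\ge\tfrac12(|A|-1)\e$ for every singleton $\{x\}\subseteq X$.

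Finally I would extract $A$ from the dimension hypothesis. A maximal $\e$-separated subset $S\subseteq Y$ has the balls $\{B(s,\e):s\in S\}$ covering $Y$, and these have diameter $\le2\e$, so $N_{2\e}(Y)\le|S|$; truncating $S$ to a finite subset shows that for every $\e>0$ there is a \emph{finite} $\e$-separated subset of $Y$ of cardinality $\ge N_{2\e}(Y)$. Since $\overline{\dim}_B(Y)>1$, fix $s$ with $1<s<\overline{\dim}_B(Y)$ (any $s>1$ if the dimension is infinite); by the definition of the $\limsup$ there is a sequence $\delta_k\to0$ with $0<\delta_k<1$ and $N_{\delta_k}(Y)>(1/\delta_k)^{s}$. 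Put $\e_k=\delta_k/2$ and choose a finite $\e_k$-separated set $A_k\subseteq Y$ with $|A_k|\ge N_{2\e_k}(Y)=N_{\delta_k}(Y)>(2\e_k)^{-s}$. Then for every singleton $\{x\}$,
\[
d^1_{\mathsf FX}(A_k,\{x\})\ \ge\ \tfrac12\big((2\e_k)^{-s}-1\big)\e_k\ =\ 2^{-s-1}\e_k^{\,1-s}-\tfrac12\e_k\ \xrightarrow[k\to\infty]{}\ \infty,
\]
because $1-s<0$ and $\e_k\to0$. Hence for $k$ large enough the set $A:=A_k\subseteq Y$ satisfies $d^1_{\mathsf FX}(A,\{x\})\ge l$ for all $\{x\}\subseteq X$.

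I expect the main obstacle to be the geometric lower bound on $L(A)$: one must check that the edge-doubling/Euler-tour argument correctly accommodates graphs carrying auxiliary Steiner vertices in $V\setminus A$, and that the sub-walks chosen are genuinely non-overlapping so their lengths add up inside the budget $2L$. The reformulation via Lemma~\ref{l:BBKZ} and the covering-versus-packing comparison are routine, and the only remaining care is bookkeeping of constants and the strict/non-strict conventions in the definitions of $N_\e$ and of $\e$-separation.
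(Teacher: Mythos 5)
Your proposal is correct and rests on the same core ideas as the paper's proof: the reduction via Lemma~\ref{l:BBKZ} to connected finite graphs, the spanning-tree traversal using each edge at most twice (your doubled-edge Euler tour is exactly the content of the paper's Lemma~\ref{l:tree}), and the comparison of maximal $\e$-separated sets with covering numbers. The only difference is organizational: you prove a direct quantitative lower bound $d^1_{\mathsf FX}(A,\{x\})\ge\tfrac12(|A|-1)\e$ and let it blow up along a sequence of scales, whereas the paper runs the same estimate as a proof by contradiction, deducing $\overline{\dim}_B(Y)\le 1$ from the covering bound $N_{2\e}(Y)\le(1+2l)/\e$.
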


\begin{proof} To derive a contradiction, assume that there exists $l\in\IN$ such that for any finite set $A\subseteq Y$ there exists $x\in X$ such that $d^1_{\mathsf FX}(A,\{x\})< l$.  

We are going to show that $N_{2\e}(Y)\le (2l+1)/\e$ for every $\e\in(0,1]$.
Given any $\e\in(0,1]$, use the Kuratowski-Zorn Lemma and find a  maximal subset $M$ in $Y$, which is $2\e$-separated in the sense that  $d_X(y,z)\ge 2\e$ for any distinct points $y,z\in M$. The maximality of the set $M$ implies that $Y\subseteq  \bigcup_{y\in M}B(y,2\e)$.

We claim that $|M|\le (1+2l)/\e$. To derive a contradiction, assume that $|M|>(1+2l)/\e$. In this case we can find a finite subset $A\subseteq M$ such that $|A|>(1+2l)/\e$. The choice of the number $l$ ensures that $d^1_{ZX}(A,\{x\})< l$ for some $x\in X$. By Lemma~\ref{l:BBKZ}, there exists a finite graph $\Gamma\in\mathbf\Gamma_{\!X\!}(\{x\},A)$ such that $\ell(\Gamma)<l$. Since each connected component of the graph $\Gamma$ meets the singleton $\{x\}$, the graph $\Gamma=(V,E)$ is connected. Replacing $\Gamma$ by a minimal connected subgraph, we can assume that $\Gamma$ is a tree.

By Lemma~\ref{l:tree} (proved below), there exists a sequence $v_0,\dots,v_n\in V$ such that 
\begin{itemize}
\item[(i)] $V=\{v_0,\dots,v_n\}$;
\item[(ii)] $\big\{\{v_{i-1},v_i\}:1\le i\le n\big\}\subseteq E$;
\item[(iii)] for every $e\in E$ the set $\big\{i\in\{1,\dots,n\}:\{v_{i-1},v_i\}=e\big\}$ contains at most two elements.
\end{itemize}

Choose a sequence of real numbers $t_0,\dots,t_n$ such that $t_0=0$ and $t_i-t_{i-1}=d_X(v_i,v_{i-1})$ for every $i\in\{1,\dots,n\}$.   The condition (iii) implies that $t_n\le 2\ell(\Gamma)<2l$. Then the set $T=\{t_0,\dots,t_n\}$ has $$N_\e(T)<1+\frac{t_n}\e<1+\frac{2l}{\e}\le \frac{1+2l}{\e}.$$ Taking into account that the map $T\to V$, $t_i\mapsto v_i$, is non-expanding, we conclude that $N_\e(A)\le N_\e(V)\le N_\e(T)< (1+2l)/\e$. Since the set $A$ is $2\e$-separated, it has cardinality $|A|=N_\e(A)<(1+2l)/\e$, which contradicts the choice of $A$. 

 This contradiction shows that $|M|\le (1+2l)/\e$ and then $N_{2\e}(Y)\le |M|\le (1+2l)/\e$ for any $\e>0$. Taking the upper limit at $\e\to+0$, we obtain the upper bound
$$\overline{\dim}_B(Y)=\limsup_{\e\to+0}\frac{\ln N_\e(Y)}{\ln(1/\e)}=\limsup_{\e\to+0}\frac{\ln N_{2\e}(Y)}{-\ln(1/(2\e))}\le\limsup_{\e\to+0}\frac{\ln((1+2l)/\e)}{\ln(1/(2\e))}=1,$$
which contradicts our assumption. 
\end{proof}

\begin{lemma}\label{l:tree} For any finite tree $\Gamma=(V,E)$, there exists a sequence $v_0,\dots,v_n\in V$ such that
\begin{itemize}
\item[(i)]  $V=\{v_0,\dots,v_n\}$, 
\item[(ii)] $\big\{\{v_{i-1},v_i\}:1\le i\le n\big\}=E$, and
\item[(iii)]  for every edge $e\in E$ the set $\{i\in\{1,\dots,n\}:\{v_{i-1},v_i\}=e\}$ contains at most two elements.
\end{itemize}
\end{lemma}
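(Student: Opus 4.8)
The plan is to prove the lemma by induction on the number $|V|$ of vertices, constructing the required sequence so that \emph{each} edge is traversed exactly twice; this simultaneously secures conditions (ii) and (iii). Conceptually, the statement is the familiar fact that doubling every edge of a tree yields a connected multigraph in which every vertex has even degree, hence one admitting an Eulerian circuit that traverses each edge of $\Gamma$ exactly twice; reading off the vertices of such a circuit produces the sequence $v_0,\dots,v_n$. I will nevertheless give the self-contained inductive argument, which avoids invoking Euler's theorem for multigraphs and makes the bookkeeping transparent.

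For the base case $|V|=1$, say $V=\{v\}$, the one-term sequence $v_0=v$ works: here $n=0$, the edge set $E$ is empty, and (ii),(iii) hold vacuously. For the inductive step I assume the claim for all trees with fewer vertices and take $|V|\ge 2$. A finite tree with at least two vertices has a leaf, i.e.\ a vertex $w$ of degree $1$; let $u$ be its unique neighbour, so $\{u,w\}\in E$. Deleting $w$ together with the edge $\{u,w\}$ gives a tree $\Gamma'=(V',E')$ with $V'=V\setminus\{w\}$ and one fewer vertex. By the inductive hypothesis there is a sequence $v_0,\dots,v_n$ for $\Gamma'$ satisfying (i)--(iii); in particular $u\in V'$ appears as some term $v_k$.

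I then form the new sequence by inserting the two-term block $w,v_k$ immediately after the term $v_k$, obtaining $v_0,\dots,v_k,w,v_k,v_{k+1},\dots,v_n$ (when $k=n$ this appends the block at the end, giving $v_0,\dots,v_n,w,v_n$). Since every consecutive pair of this walk is joined by an edge, the set of traversed pairs is automatically contained in $E$. The insertion adds the new vertex $w$, so (i) holds; it traverses the edge $\{u,w\}=\{v_k,w\}$ exactly twice, once on the pair $(v_k,w)$ and once on $(w,v_k)$, while the split pair is restored as $(v_k,v_{k+1})$ after the block. Hence every edge of $E'$ is still traversed with unchanged multiplicity and the single new edge is traversed exactly twice, so all edges of $E=E'\cup\{\{u,w\}\}$ are covered (giving (ii)) and no edge is traversed more than twice (giving (iii)).

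The argument is elementary and I do not anticipate a genuine obstacle. The only points needing care are the existence of a leaf in a finite tree with $\ge 2$ vertices, which follows from the degree-sum identity $\sum_{v\in V}\deg(v)=2|E|=2(|V|-1)<2|V|$ forcing some vertex to have degree $1$, and the local bookkeeping at the insertion point, including the boundary case $k=n$. Verifying that the insertion preserves the multiplicity of every previously present edge while raising the count of the one new edge to exactly two is the crux, and it is immediate from the explicit local form of the modified sequence.
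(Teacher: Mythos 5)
Your proof is correct and follows essentially the same route as the paper's: induction on the number of vertices, deleting a leaf $w$ with unique neighbour $u$, and splicing the block $w,u$ into the inductively obtained sequence right after an occurrence of $u$. The only cosmetic differences are that you justify the existence of a leaf via the degree-sum identity rather than by citing $|E|=|V|-1$ directly, and that you make explicit the (slightly stronger) invariant that every edge is traversed exactly twice, which the paper's construction also achieves implicitly.
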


\begin{proof} This lemma will be proved by induction on the cardinality $|V|$ of the tree $V$. If $|V|=1$, then let $v_0$ be the unique vertex of $X$ and observe that the sequence $v_0$ has the properties (i)--(iii). Assume that for some $k\ge 2$ the lemma has been proved for all trees on $<k$ vertices. Let $\Gamma=(V,E)$ be any tree with $|V|=k$. By \cite[1.5.1]{Diestel}, the tree $\Gamma$ has  exactly $k-1$ edges. Consequently, there exists a vertex $v\in V$ having a unique neighbor $u\in V\setminus\{v\}$ in the tree $(V,E)$. Put $V'=V\setminus\{v\}$, $E'=E\setminus\big\{\{u,v\}\big\}$ and observe that $(V',E')$ is a tree on $k-1$ vertices. By the inductive assumption, there exists a sequence $v'_1,\dots,v'_n\in V'$ such that $V'=\{v'_1, \dots, v'_n\}$, $\big\{\{v'_{i-1},v'_i\}:i\in \{1,\dots,n\}\big\}=E'$, and for every $e\in E'$ the set $\{i\in\{1,\dots,n\}:\{v'_{i-1},v'_i\}=e\}$ contains at most two elements.

Find an index $j\in\{1,\dots,n\}$ such that $v'_j=u$ and consider the sequence $v_0,\dots,v_{n+1}$, where $v_i=v'_i$ for $i\le j$, $v_{j+1}=v$, and $v_{i}=v'_{i-2}$ for $i\in \{j+1,\dots,n+2\}$.  It is easy to see that the sequence $v_0,\dots,v_{n+2}$ has the properties (i)--(iii).
\end{proof}

Proposition~\ref{p:dim} implies the following corollary, in which by $\mathsf FX$ we denote the hyperspace of nonempty finite subsets of $X$, endowed with the Hausdorff metric.

\begin{corollary} Let $X$ be a metric space. If for some point $x\in X$ the identity map $\mathsf FX\to\mathsf F^1\!X$ is continuous at $\{x\}$, then the point $x$ has a neighborhood $O_x\subseteq X$ with box-counting dimension $\overline{\dim}_B(O_x)\le 1$.
\end{corollary}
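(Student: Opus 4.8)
The plan is to derive this corollary directly from Proposition~\ref{p:dim}; the argument is short, so I will aim for a clean direct proof rather than a contrapositive. First I would unpack what continuity of the identity map $\mathsf FX\to\mathsf F^1\!X$ at the point $\{x\}$ means concretely. Because $\{x\}$ is a singleton, the Hausdorff distance reduces to $d_{\mathsf FX}(A,\{x\})=\max_{a\in A}d_X(a,x)$, so the inequality $d_{\mathsf FX}(A,\{x\})<\delta$ is just the inclusion $A\subseteq B(x,\delta)$. Thus the assumed continuity at $\{x\}$, applied to the value $\e=1$, provides some $\delta>0$ such that every nonempty finite set $A\subseteq B(x,\delta)$ satisfies $d^1_{\mathsf FX}(A,\{x\})<1$.

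Next I would take $O_x:=B(x,\delta)$ as the candidate neighborhood and verify $\overline{\dim}_B(O_x)\le 1$ by contradiction. Supposing instead that $\overline{\dim}_B(B(x,\delta))>1$, I would apply Proposition~\ref{p:dim} to the subspace $Y=B(x,\delta)$ with the integer $l=1$. This yields a nonempty finite set $A\subseteq B(x,\delta)$ with $d^1_{\mathsf FX}(A,\{y\})\ge 1$ for \emph{every} singleton $\{y\}\subseteq X$; specializing to $y=x$ gives $d^1_{\mathsf FX}(A,\{x\})\ge 1$. But $A\subseteq B(x,\delta)$ forces $d^1_{\mathsf FX}(A,\{x\})<1$ by the choice of $\delta$, a contradiction. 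Hence $\overline{\dim}_B(O_x)\le 1$, which is exactly the conclusion.

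There is essentially no serious obstacle here, since the whole analytic content is already packaged in Proposition~\ref{p:dim}. The only points that require care are the elementary translation of ``Hausdorff distance to a singleton'' into ``containment in a metric ball,'' and the observation that the open ball $B(x,\delta)$ is genuinely a neighborhood of $x$ (as $x\in B(x,\delta)$), so its box-counting dimension is the quantity to be bounded. The one subtlety worth double-checking is the quantifier structure in Proposition~\ref{p:dim}: it produces a single finite set $A$ that is simultaneously $d^1_{\mathsf FX}$-far from \emph{all} singletons, which is precisely what allows the comparison with $\{x\}$ after $A$ has already been selected inside $B(x,\delta)$.
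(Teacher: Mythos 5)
Your proposal is correct and follows essentially the same route as the paper's own proof: extract $\delta$ from continuity at $\{x\}$ with $\e=1$, set $O_x=B(x,\delta)$, and apply Proposition~\ref{p:dim} to derive a contradiction with $\overline{\dim}_B(O_x)>1$. Your version is in fact slightly more careful on two minor points --- you make explicit that $d_{\mathsf FX}(A,\{x\})<\delta$ is equivalent to $A\subseteq B(x,\delta)$, and you quote the bound from Proposition~\ref{p:dim} as $d^1_{\mathsf FX}(A,\{x\})\ge 1$ rather than the strict inequality the paper loosely writes --- but the argument is the same.
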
 

\begin{proof} Assuming that the identity map $\mathsf FX\to\mathsf Z^1\!X$ is continuous at $\{x\}$, we can find $\delta>0$ such that for any set $A\in\mathsf FX$ with $d_{\mathsf FX}(A,\{x\})<\delta$ we have $d_{\mathsf FX}^1(A,\{x\})<1$. Let $O_x:=B(x,\delta)$. Assuming that $\overline{\dim}_B(O_x)>1$, we can apply Proposition~\ref{p:dim} and find a finite set $A\subseteq O_x$ such that $d^1_{\mathsf FX}(A,\{x\})>1$. On the other hand, the inclusion $A\subseteq O_x=B(x,\delta)$ implies that $d_{\mathsf FX}(A,x)<\delta$ and hence $d_{\mathsf FX}^1(A,\{x\})<1$ by the choice of $\delta$. This contradiction shows that $\overline{\dim}_B(O_x)\le 1$.
\end{proof}

Finally, we present an example showing that the equivalence $(2)\Leftrightarrow(3)$ in Proposition~\ref{p:zero2} does not hold for higher-dimensional Euclidean spaces.

\begin{example} Assume that $X$ is a complete metric space such that every nonempty open set $U\subseteq X$ has box-counting dimension $\overline{\dim}_B(U)>1$. Then every nonempty open set $U$ contains a compact subset $A\subseteq U$ such that $A$ has $1$-dimensional Hausdorff measure zero but fails to have zero length.
\end{example}

\begin{proof} 
Choose any point $x_0\in U$ and a positive number $\e_0$ such that $B[x_0,\e_0]\subseteq U$. Put $A_0=\{x_0\}$. For every $n\in\IN$ we shall inductively choose a finite subset $A_n\subseteq X$, a positive real number $\e_n$, and a map $r_n:A_n\to A_{n-1}$, satisfying the following conditions:
\begin{itemize}
\item[(i)] $A_{n-1}\subseteq A_n$;
\item[(ii)] $\e_n\le \frac1{2^n|A_n|}$;
\item[(iii)] $B[x,\e_n]\cap B[y,\e_n]=\emptyset$ for any distinct points $x,y\in A_n$;\item[(iv)] $r_n(x)=x$ for any $x\in A_{n-1}$;
\item[(v)] $B[x,\e_n]\subseteq B(r_n(x),\e_{n-1})$ for any $x\in A_{n-1}$;
\item[(vi)] $d^1_{\mathsf FX}(\{x\},r_n^{-1}(x))>n$ for every $x\in A_{n-1}$.
\end{itemize}
Assume that for some $n\in\IN$ we have constructed a set $A_{n-1}$ and a number $\e_{n-1}>0$ satisfying the condition (iii). By our assumption, for every $y\in A_{n-1}$ the ball $B(y,\e_{n-1})$ has $\overline{\dim}_BB(y,\e_{n-1})>1$.  By Proposition~\ref{p:dim}, the ball $B(y,\e_{n-1})$ contains a finite subset $A'_y$ such that $d^1_{\mathsf FX}(A'_y,\{y\})>n$. The definition of the metric $d^1_{\mathsf FX}$ implies that $d^1_{\mathsf FX}(A'_y\cup\{y\},\{y\})=d^1_{\mathsf FX}(A'_y,\{y\})>n$.  Let $A_n=\bigcup_{y\in A_{n-1}}(\{y\}\cup A'_y)$ and $r_n:A_n\to A_{n-1}$ be the map assigning to each point  $x\in A_n$ the unique point $y\in A_{n-1}$ such that $x\in A'_y\cup\{y\}$. It is clear that the $A_n$ satisfies the inductive condition (i) and the function $r_n$ satisfies the conditions (iv), (vi). Now choose any number $\e_n$ satisfying the conditions (ii), (iii) and (v). This completes the inductive step.

After completing the inductive construction, consider the compact set
$$A=\bigcap_{n\in\w}\bigcup_{x\in A_n}B[x,\e_n]\subseteq U$$ in $X$.
We claim that the set $A$ has 1-dimensional Hausdorff measure zero. Given any $\e>0$, find $n\in\w$ such that $\frac2{2^n}<\e$ and observe that $A\subseteq\bigcup_{x\in A_n}B(x,2\e_n)$ and $$\sum_{x\in A_n}2\e_n<\sum_{x\in A_n}\frac2{2^n|A_n|}=\frac2{2^n}<\e,$$
witnessing that the 1-dimensional Hausdorff measure of $A$ is zero.

Assuming that $A$ has zero length, we  calculate the distance $d^1_{\mathsf ZX}(A,A_0)<\infty$ and find a graph $\Gamma\in\mathbf \Gamma_{\!X\!}(A,A_0)$ such that $\ell(\Gamma)<\infty$. Since each component of $\Gamma$ intersects the singleton $A_0=\{x_0\}$, the graph $\Gamma$ is connected. Take any integer number $n>\ell(\Gamma)$ and conclude that for every $x\in A_{n-1}$ we have $\{x\}\cup r_n^{-1}(x)\subseteq A\subseteq\overline V$ and hence $\Gamma\in\mathbf\Gamma_{\!X\!}(\{x\},r_n^{-1}(x))$. By Lemma~\ref{l:BBKZ},
$$d^1_{\mathsf FX}(\{x\},r_n(x))=d^1_{\mathsf ZX}(\{x\},r_n(x))\le\ell(\Gamma)<n,$$
which contradicts the inductive condition (vi). This contradiction shows that the set $A$ fails to have zero length.
\end{proof}

\begin{remark} There are interesting algorithmic problems related to efficient calculating the distance $d^1_{\mathsf FX}(A,B)$ between nonempty finite subsets $A,B$ of a metric space. For a nonempty finite subset $A$ of the Euclidean plane $\IR^2$ and a singleton $B=\{x\}\subset\IR^2$, the problem of calculating the distance $d^1_{\mathsf FX}(A,B)$ reduces to the classical Steiner's problem \cite{Steiner} of finding a tree of the smallest length that contains the set $A\cup B$. This problem is known \cite{Holby} to be computationally very difficult. On the other hand, for  nonempty finite subsets  of the real line, there exists an efficient algorithm \cite{MO} of complexity $O(n\ln n)$ calculating the distance $d^1_{\mathsf F\IR}(A,B)$ between two sets $A,B\in\mathsf F\mathbb R$ of cardinality $|A|+|B|\le n$. Also there exists an algorithm of the same complexity $O(n\ln n)$ calculating the Hausdorff distance $d_{\mathsf F\IR}(A,B)$ between the sets $A,B$. Finally, let us remark that the evident brute force algorithm for calculating the Hausdorff distance $d_{\mathsf FX}(A,B)$ between nonempty finite subsets of an arbitrary metric space $(X,d_X)$  has complexity $O(|A|{\cdot}|B|)$. Here we assume that calculating the distance between points requires a constant amount of time.
\end{remark}


\begin{thebibliography}{}

\bibitem{MO} T.~Banakh, {\em A quick algorithm for calculating the $\ell^1$-distance between two finite sets on the real line}, (https://mathoverflow.net/a/277928/61536), 04.08.2017.

\bibitem{BBKZ} T.~Banakh,  V.~Brydun, L.~Karchevska, M.~Zarichnyi, {\em The $\ell^p$-metrization of functors with finite supports}, preprint.

\bibitem{Beer} G.~Beer, {\em Topologies on closed and closed convex sets}, Kluwer Academic Publishers Group, Dordrecht, 1993.

\bibitem{Steiner} M.~Brazil, R.~Graham, D.A.~Thomas, M.~Zachariasen, {\em On the history of the Euclidean Steiner tree problem}, Arch. Hist. Exact Sci. {\bf 68}:3 (2014),  327--354.

\bibitem{Diestel} R.~Diestel, {\em Graph Theory}, GTM {\bf 173}. Springer-Verlag, Berlin, 2005.

\bibitem{Edgar} G.~Edgar, {\em Measure, Topology, and Fractal Geometry}, Springer, New York, 2008.

\bibitem{Eng} R.~Engelking, {\em General Topology}, Heldermann Verlag, Berlin, 1989.

\bibitem{Fal} K.~Falconer, {\em Fractal Geometry. Mathematical foundations and applications}, John Wiley \& Sons, Inc., Hoboken, NJ, 2003.

\bibitem{Holby} M.W.~Bern, R.~Graham,{\em The shortest-network problem},  Scientific American. {\bf 260}:1 (1989) 84--89.


\end{thebibliography}
\end{document}